\newtheorem{theorem}{Theorem}[section]
\newtheorem{lemma}[theorem]{Lemma}
\newtheorem{proposition}[theorem]{Proposition}
\newtheorem{corollary}[theorem]{Corollary}
\newcounter{intro}
\newtheorem{questionx}{Question}
\newtheorem{introthm}[intro]{Theorem}
\theoremstyle{definition}
\newtheorem{example}[theorem]{Example}
\newtheorem{remark}[theorem]{Remark}
\newtheorem{construction}[theorem]{Construction}
\newtheorem{chunk}[theorem]{}
\newtheorem{thm}{Theorem}[subsection]
\theoremstyle{definition}
\newtheorem{ex}[thm]{Example}
\newtheorem{rem}[thm]{Remark}
\newtheorem{ch}[thm]{}
\newtheorem{notation}[thm]{Notation}
\newcommand{\ges}{\geqslant}
\newcommand{\les}{\leqslant}
\newcommand{\Spec}{{\operatorname{Spec}}}
\newcommand{\spec}{{\operatorname{Spec}^*}}
\newcommand{\Hom}{{\operatorname{Hom}}}
\newcommand{\Ext}{{\operatorname{Ext}}}
\newcommand{\Tor}{{\operatorname{Tor}}}
\newcommand{\RHom}{\operatorname{\mathsf{RHom}}}
\newcommand{\del}{\partial}
\renewcommand{\H}{\operatorname{H}}
\renewcommand{\S}{\mathcal{S}}
\DeclareMathOperator{\thick}{\mathsf{thick}}
\DeclareMathOperator{\codim}{codim}
\DeclareMathOperator{\codepth}{codepth}
\DeclareMathOperator{\coker}{coker}
\DeclareMathOperator{\depth}{depth}
\DeclareMathOperator{\ad}{ad}
\newcommand{\supp}{{\operatorname{Supp}}}
\newcommand{\xra}{\xrightarrow}
\newcommand{\vp}{\varphi}
\newcommand{\f}{\bm{f}}
\newcommand{\V}{{\rm{V}}}
\newcommand{\cV}{{\mathcal{V}}}
\renewcommand{\ll}{\ell\ell}
\newcommand{\ann}{{\operatorname{ann}}}
\newcommand{\cat}{{\mathsf{LScat}}}
\newcommand{\shift}{{\scriptstyle\mathsf{\Sigma}}}
\newcommand{\lotimes}{\otimes^{\sf L}}
\DeclareMathOperator{\D}{\mathsf{D}}
\DeclareMathOperator{\height}{height}
\DeclareMathOperator{\cid}{cid}
\DeclareMathOperator{\rank}{rank}
\DeclareMathOperator{\rad}{rad}
\DeclareMathOperator{\Df}{\mathsf{{D}^{f}}}
\newcommand{\T}{\mathsf{T}}
\newcommand{\level}{\mathsf{level}}
\newcommand{\m}{\mathfrak{m}}
\newcommand{\p}{\mathfrak{p}}
\renewcommand{\t}{\mathsf{t}}
\newcommand{\h}{\mathsf{h}}
\title[Bounds on cohomological support varieties]{Bounds on cohomological support varieties}
\author[B.~Briggs]{Benjamin Briggs}
\address{Department of Mathematical Sciences,
University of Copenhagen,
Universitetsparken 5
DK-2100 Copenhagen \O}
\email{bpb@math.ku.dk}
\author[E.~Grifo]{Elo\'{i}sa Grifo}
\address{Department of Mathematics,
University of Nebraska, Lincoln, NE 68588, U.S.A.}
\email{grifo@unl.edu}
\author[J.~Pollitz]{Josh Pollitz}
\address{Mathematics Department, 
Syracuse University, 
Syracuse, NY 13244 U.S.A.}
\email{jhpollit@syr.edu}
\keywords{cohomological support variety, homotopy Lie algebra, thick subcategories, levels, Golod rings, dg algebras, Lusternik-Schnirelmann category.}
\subjclass[2020]{Primary: 13D09. Secondary: 13C15, 13D02, 13D07, 13H10, 14M10, 16E45.}
\begin{document}

\begin{abstract}
Over a local ring $R$, the theory of cohomological support varieties attaches to any bounded complex $M$ of finitely generated $R$-modules an algebraic variety $\V_R(M)$ that encodes homological properties of $M$. We give lower bounds for the dimension of   $ \V_R(M)$ in terms of classical invariants of $R$. 
In particular, when $R$ is Cohen-Macaulay and not complete intersection we find that there are always varieties that cannot be realized as the cohomological support of any complex. When $M$ has finite projective dimension, we also give an upper bound for $ \dim \V_R(M)$ in terms of the dimension of the radical of the homotopy Lie algebra of $R$. This leads to an improvement of a bound due to Avramov, Buchweitz, Iyengar, and Miller on the Loewy lengths of finite free complexes, 
and it recovers a result of Avramov and Halperin on the homotopy Lie algebra of $R$.
Finally, we completely classify the varieties that can occur as the cohomological support of a complex over a Golod ring.
\end{abstract}

\maketitle

\vspace{-.2in}
\section*{Introduction}\label{s_intro}

In local algebra, the theory of cohomological support varieties attaches to any complex $M$ of modules over a local ring $R$ an algebraic variety $\V_R(M)$, defined over the residue field of $R$. These were first introduced by Avramov~\cite{Avramov:1989} for local complete intersection rings, taking inspiration from modular representation theory~\cite{Quillen:1971}. They were used by Avramov and Buchweitz to establish striking symmetry results in the homological behavior of modules over complete intersection rings \cite{Avramov/Buchweitz:2000b}. The theory has been extended in  \cite{Avramov/Buchweitz:2000b,Avramov/Iyengar:2018,Burke/Walker:2015,Jorgensen:2002,Pollitz:2019,Pollitz:2021}, and now encompasses all local rings. 
These varieties encode important homological information about $M$, as well as ring theoretic properties of~$R$. For example,  the third author showed \cite{Pollitz:2019} that $R$ is  complete intersection if and only if $\V_R(R)$ is a point, and thus nontrivial varieties act as obstructions to the complete intersection property.

Throughout, $R$ will be a noetherian local ring with residue field $k$. By Cohen's structure theorem, the completion of $R$ at its maximal ideal has the form $Q/I$
where $(Q,\m)$ is a regular local ring and $I$  is an ideal minimally generated by $n$ elements $f_1, \ldots, f_n$ of $\m^2$. The cohomological support variety of a complex $M$ of $R$-modules is a conical, algebraic subvariety
\[
\V_R(M)\subseteq \mathbb{A}_k^{n}
\]
(equivalently, a projective subvariety of $\mathbb{P}_k^{n-1}$ of dimension one less). 
In this paper, we give both upper and lower bounds for $\dim \V_R(M)$, in terms of other invariants of $R$ and $M$, for any bounded complex $M$ of finitely generated $R$-modules.

As a consequence, we give a negative answer to the \emph{realizability question}: 

\begin{questionx}\label{q_realisation}
Given a fixed $R$,  can every conical subvariety of $\mathbb{A}_k^n$ be realized as the cohomological support of some bounded complex of finitely generated $R$-modules?
\end{questionx}

When $R$ is complete intersection, Bergh \cite{Bergh} showed that any conical variety is the cohomological support of some $R$-module; see also \cite{Avramov/Iyengar:2007}. In general, the third author established a partial negative answer: \emph{if $R$ is  not  complete intersection then $\{0\}$ is not the cohomological support of any finite $R$-module}~\cite[Theorem B]{Pollitz:2021}; but the proof there says nothing about the cohomological supports of  \emph{complexes}.

One motivation for \cref{q_realisation} is that constraints on the cohomological support impose constraints on \emph{the lattice of thick subcategories of the bounded derived category $R$} (see \cref{s_level}). In particular, if $\{0\}$ cannot be realized as the  support of any complex, then cohomological supports can be used to detect when thick subcategories are zero.  For complete intersection rings, the thick subcategories were classified by Stevenson via cohomological support (phrased in triangulated terms) \cite{Stevenson:2014a}, and the lattice of all such categories has good geometric properties. The situation for general local rings seems to be markedly different; see \cref{cor_thick_intersection} for the case of Cohen-Macaulay rings, and \cref{cor Lzeta intersection golod} for the case of Golod rings.

Our non-realizability result follows from a lower bound on the dimension of cohomological support varieties in terms of the following invariants of $R$: the deviations $\varepsilon_1(R)=\dim Q$ and $\varepsilon_2(R)=n$, as well as $\depth(R)$ and, if $R$ is artinian, the Loewy length $\ll_R(R)$.

\begin{introthm}\label{theorem A}
For any local ring $R$ and any bounded complex $M$ of finitely generated modules with $\H(M)\neq 0$, we have
\[
\dim \V_R(M) \ges \varepsilon_2(R)- \varepsilon_1(R) + \depth(R)\,,
\]
with strict inequality if $R$ is not complete intersection. Moreover, if $R$ is artinian then 
\[
\dim \V_R(M) \ges \varepsilon_2(R)-\ll_R (R)+1\,.
\]
\end{introthm}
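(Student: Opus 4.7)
The plan is to realize $\V_R(M)$ as the support of a finitely generated graded module $\mathcal{M}$ over the ring $S = k[\chi_1,\dots,\chi_n]$ of cohomology operators (with $\deg\chi_i = 2$), and then to obtain the lower bound by exhibiting a long $\mathcal{M}$-regular sequence in $S$. Passing to the completion leaves every quantity in the statement unchanged, so I may write $R = Q/I$ with $(Q,\mathfrak n)$ regular local of dimension $\varepsilon_1(R)$ and $I = (f_1,\dots,f_n)$ a minimal generating set, with $n = \varepsilon_2(R)$. Then $\dim\V_R(M) = n - \height_S\operatorname{Ann}_S\mathcal{M}$, so everything reduces to upper-bounding this height.

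For the main bound $\dim\V_R(M) \ges n - \codepth R$, I would construct an $\mathcal{M}$-regular sequence in the maximal ideal of $S$ of length $n - \codepth R$. The natural route is a two-stage reduction. First, reduce to the artinian case: pick a maximal $R$-regular sequence $y_1,\dots,y_d$ in $\mathfrak m$ with $d = \depth R$, pass to $\bar R = R/(\bm{y})$ and $\bar M = M \otimes^{\mathsf L}_R \bar R$ (which still has nonzero homology as $\bm{y}$ is $R$-regular), and apply base change to embed $\V_{\bar R}(\bar M) \hookrightarrow \V_R(M)$ inside a common $\mathbb A^n_k$, while preserving both $n$ and $\codepth$. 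Second, in the artinian case the bound reads $\dim\V_R(M) \ges n - \varepsilon_1(R)$; I would obtain this by a generic choice of degree-2 cohomology operators, leveraging the structure of the dg algebra resolution of $R$ over $Q$ to guarantee $\mathcal{M}$-regularity.

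The strict inequality when $R$ is not complete intersection follows by invoking Pollitz's theorem \cite{Pollitz:2019}, which supplies a nonzero direction in $\V_R(R)$ and hence a degree-2 cohomology operator $\chi_\zeta$ that is not nilpotent on $\Ext_R^*(R,k)$; pairing with $M$ via $\Ext_R^*(R,k) \otimes_R M \to \Ext_R^*(M,k)$ propagates this non-nilpotency to $\mathcal{M}$ and extends the regular sequence by one. For the Loewy length bound, the key observation is that $\mathfrak m^{\ll_R(R)} = 0$ in the artinian case forces every cohomology operator $\chi_\zeta$ to have nilpotence index at most $\ll_R(R) - 1$ on the graded pieces of $\mathcal{M}$; a generic-linear-subspace argument then gives $\height\operatorname{Ann}_S\mathcal{M} \les \ll_R(R) - 1$, whence $\dim\V_R(M) \ges n - \ll_R(R) + 1$.

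The hard part will be Step~2 of the main bound: establishing the $\mathcal{M}$-regularity of generic degree-2 cohomology operators for an \emph{arbitrary} bounded complex $M$ with $\H(M)\neq 0$, rather than only for structural test objects such as $k$ or $R$. This likely requires a change-of-rings argument from $Q$ to $R$ that transfers the more accessible Noether-normalization data on the universal Ext algebra $\Ext_R^*(k,k)$ to the $S$-module $\mathcal{M}$ attached to $M$, with careful bookkeeping for contributions from higher deviations of $R$.
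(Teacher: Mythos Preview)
Your approach diverges substantially from the paper's, and it contains a genuine gap together with at least one false step.

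The paper does not argue via $\mathcal{M}$-regular sequences at all. Instead it introduces the Lusternik--Schnirelmann category $\cat(R)$ of a local ring (via a minimal semifree model) and proves the single inequality
\[
\dim \V_R(M)\ \geqslant\ \varepsilon_2(R)-\cat(R)
\]
for every nonzero $M$ in $\Df(R)$. Both displayed bounds in the theorem then follow immediately from known inequalities $\cat(R)\leqslant \codepth(R)$ (strict when $R$ is not complete intersection) and $\cat(R)\leqslant \ll_R(R)$ when $R$ is artinian. The proof of the key inequality runs through a BGG-type passage $\Df(E)\to\Df(\Lambda)\to\Df(\S)$ and the dg new intersection theorem: one bounds $\level_{K^R}^k(K^M)$ from above by $\cat(R)+1$ (using the retract diagram defining LS category) and from below by $\height(\ann_\S \mathcal{M})+1$ (new intersection), giving $\codim \V_R(M)\leqslant \cat(R)$.

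Your Step~2 is, as you say, the hard part, and the proposal does not supply the mechanism. ``Generic degree-2 operators are $\mathcal{M}$-regular'' is exactly the statement to be proved; there is no evident Noether-normalization transfer from $\Ext_R^*(k,k)$ to $\mathcal{M}$ that works uniformly for an arbitrary $M$, and the paper's route via levels and LS category is precisely what replaces such an argument.

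More seriously, the Loewy-length argument rests on a false premise. The cohomology operators $\chi_i$ act on $\mathcal{M}=\Ext_E(k,\widehat{M})$, not on $\H(M)$, and they are \emph{not} forced to be nilpotent by $\m^{\ll_R(R)}=0$. For $M=k$ (a legitimate test object when $R$ is artinian) the module $\Ext_E(k,k)$ is free of finite rank over $\S$, so every nonzero $\chi_\zeta$ acts injectively---its nilpotence index is infinite. Thus the inference ``$\m^{\ll_R(R)}=0 \Rightarrow$ each $\chi_\zeta$ is nilpotent of index $\leqslant \ll_R(R)-1$ on $\mathcal{M}$'' fails, and with it the proposed bound $\height\ann_\S\mathcal{M}\leqslant \ll_R(R)-1$ by that method. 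Likewise, the strict-inequality step (``non-nilpotency propagates along $\Ext_R^*(R,k)\otimes M\to \Ext_R^*(M,k)$'') is not a valid argument: non-nilpotency of an operator does not transfer through such a pairing without further structure. In the paper, strictness is a one-line consequence of the strict inequality $\cat(R)<\codepth(R)$ for non-complete-intersections.
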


Both inequalities in the theorem follow from a stronger bound in terms of the \emph{Lusternik-Schnirelmann category} of $R$; see \cref{cor_bound}. 

For Cohen-Macaulay rings, the lower bound  $\varepsilon_2(R)- \varepsilon_1(R) + \depth(R)$ is known as the {\bf complete intersection defect} of $R$. This integer is always nonnegative, and vanishes exactly when $R$ is complete intersection \cite{KiehlKunz,Avramov:1977}. Thus \cref{theorem A} yields a strong negative answer to the realizability question: \emph{if $R$ is Cohen-Macaulay but not complete intersection then}
\[
\dim \V_R(M)\ges 2
\]
\emph{for any bounded complex $M$ of finitely generated modules with $\H(M)\neq 0$.} 


Our upper bounds on the dimension of $\V_R(M)$ exploit a new connection between the cohomological support varieties and the \emph{homotopy Lie algebra} of $R$. This is a graded Lie algebra $\pi^*(R)$ attached to $R$ that has seen a number of important applications in local algebra (cf.~\cite[Section 10]{Avramov:2010}); the $k$-vector space ranks of its graded pieces are the deviations of $R$. The next result bounds $\dim \V_R(M)$ in terms of the subspace $\rad(\pi^*(R))$ of radical elements, first studied in \cite{Felix/Halperin/Jacobsson/Lofwall/Thomas:1988}. See \cref{s_HLA_background} for definitions and discussion.

\begin{introthm}\label{theorem C}
For any local ring $R$ and any bounded complex $M$ of finitely generated free modules
\[
\dim \V_R(M) \leqslant \varepsilon_2(R)- \rank_k \rad^2(\pi^*(R))\,.
\]
\end{introthm}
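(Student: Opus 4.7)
The plan is to show that $\V_R(M)$ is contained in the linear subspace of $\mathbb{A}_k^{\varepsilon_2(R)}$ cut out by $\rad^2(\pi^*(R))$, from which the bound on $\dim \V_R(M)$ is immediate. Recall that the cohomological support may be written as $\V_R(M)=\V(\ann_{\S}\Ext_R^*(k,M))$, where $\S$ is a polynomial subring of $\Ext_R^*(k,k)$ of Krull dimension $\varepsilon_2(R)=n$. Under the standard identification $\Ext_R^*(k,k)\cong U(\pi^*(R))$, the space of linear forms of $\S$ agrees with $\pi^2(R)$, which, being concentrated in even degree, generates a polynomial subalgebra of $U(\pi^*(R))$. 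In particular, $\rad^2(\pi^*(R))$ determines a linear subspace of $\S_1$ of dimension $\rank_k \rad^2(\pi^*(R))$.

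The heart of the argument, and the step I expect to be the main obstacle, is to prove that every $\zeta\in \rad^2(\pi^*(R))$ acts nilpotently on $\Ext_R^*(k,M)$ whenever $M$ is a bounded complex of finitely generated free $R$-modules. This is the point at which the Lie-theoretic notion of a radical element is translated into a cohomological vanishing statement. For $M=k$ the nilpotency is essentially what the characterization of $\rad(\pi^*(R))$ in \cite{Felix/Halperin/Jacobsson/Lofwall/Thomas:1988} amounts to: radical elements act nilpotently on any graded-finite module over $U(\pi^*(R))$. To pass from $k$ to a general bounded complex of finite free $R$-modules $M$, one uses that $\Ext_R^*(k,M)$ is finitely generated over $\Ext_R^*(k,k)$ and then transfers the nilpotency by a standard noetherian reduction; alternatively, the support formalism can be worked at the level of dg algebras, where the action of $\S$ on the perfect derived category of $R$ is realised by a Koszul-type construction dual to $\pi^2(R)$, and nilpotency of $\zeta$ can be read off from this action.

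Granting this nilpotency, every $\zeta\in\rad^2(\pi^*(R))$ lies in the nilradical of $\ann_{\S}\Ext_R^*(k,M)$, so
\[
\V_R(M) \subseteq \V(\rad^2(\pi^*(R)))\,.
\]
The right-hand side is a linear subspace of $\mathbb{A}_k^{\varepsilon_2(R)}$ whose codimension equals $\rank_k\rad^2(\pi^*(R))$, yielding the desired upper bound on $\dim \V_R(M)$. The remaining parts of the argument---unpacking the support-variety formalism and computing the dimension of a linear subspace---are routine; the essential content is entirely in controlling the action of radical elements on the cohomology of a perfect complex.
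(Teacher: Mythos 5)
The overall shape of your argument---show that $\V_R(M)$ lies inside the linear subspace cut out by the image of $\rad^2(\pi^*(R))$---is the right one, and it is what the paper does via \cref{th_radical_support}. But the framework is set up on the wrong side of a crucial distinction, and the central step is asserted rather than proved. First, $\V_R(M)$ is not defined via $\Ext_R(k,M)$ as a module over a polynomial subring of $\Ext_R(k,k)$: under the identification $\Ext_R(k,k)\cong U(\pi^*(R))$, the degree-two part $\pi^2(R)$ does not generate a commutative (let alone polynomial) subalgebra, since for $\alpha,\beta\in\pi^2(R)$ one has $\alpha\beta-\beta\alpha=[\alpha,\beta]\in\pi^4(R)$, which is generally nonzero. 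The operators $\S$ are central only on $\D(E)$, where $E$ is the Koszul complex on the generators of $I$ over $Q$, and the support is $\supp_\S\Ext_E(k,\widehat{M})$; see \cref{c_cohsupp} and \cref{ex_koszul}.

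Second, and more seriously, your proposed base case is false: radical elements do not act nilpotently on $\Ext_R(k,k)$, nor on arbitrary graded-finite modules over $U(\pi^*(R))$. When $R$ is a complete intersection every element of $\pi^2(R)$ is central, hence radical, yet $\Ext_E(k,k)$ is a free $\S$-module and $\V_R(k)=\spec\S$ is everything. The correct statement is that $\chi_\alpha$ acts nilpotently on $\Ext_E(\widehat{R},k)$ when $\alpha$ is radical; one then passes to an arbitrary nonzero perfect complex $M$ via the containment $\V_R(M)\subseteq\V_R(R)$, not from $k$ upward by a noetherian reduction---the direction of that reduction is reversed, since the variety of $k$ is the largest, not the smallest. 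This nilpotency on $\Ext_E(\widehat{R},k)$ is exactly the step you flag as the main obstacle, and it does not follow from the characterization of the radical in the Five Author Paper. The paper proves it by identifying $\Ext_E(\widehat{R},k)$ with $\H(k[X_{\geqslant 2}])^\vee$ for a minimal model $Q[X]$, realizing $\chi_\alpha$ as a derivation $\theta_\alpha$ on $k[X_{\geqslant 2}]$ whose induced map on the indecomposables is dual to $-\ad(\alpha)$, deducing from $\ad(\alpha)^p=0$ that $\theta_\alpha^q(X_{\geqslant 2})\subseteq(X_{\geqslant 2})^m$ for any prescribed $m$, and finally invoking finiteness of the Lusternik--Schnirelmann category of $R$ to make the inclusion $(X_{\geqslant 2})^m\hookrightarrow(X_{\geqslant 2})$ nullhomotopic, whence $\chi_\alpha^q=0$. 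Without a substitute for this chain of identifications, your argument does not go through.
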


This is really a result about $\V_R(R)$ since the complexes considered satisfy $\V_R(M)=\V_R(R)$. \Cref{theorem C} follows from the more precise \Cref{th_radical_support}, which describes how each radical element of $\pi^*(R)$ gives rise to a hyperplane containing $\V_R(R)$. In particular, we obtain an upper bound on the rank of the smallest vector subspace of $\mathbb{A}^n_k$ containing $\V_R(M)$, which is often larger than $\dim \V_R(M)$. 

Combined with the characterization  \cite{Pollitz:2019} of local complete intersection rings in terms of $\V_R(R)$, \cref{theorem C} immediately implies a result of Avramov and Halperin \cite{Avramov/Halperin:1987}, that if every element of $\pi^2(R)$ is radical then $R$ is complete intersection; see \cref{cor_AH_rad}. A related open problem of Avramov \cite[Problem 4.3]{Avramov:1989a} connects the existence of embedded deformations to central elements in $\pi^2(R)$. In \cref{remark embedded deformation} we discuss how our results shed some new light on this question.

As another consequence of Theorem \ref{theorem C}, as well as the proof of \cref{theorem A}, is the following improvement to a central result from \cite{Avramov/Buchweitz/Iyengar/Miller:2010} that bounds the Loewy length of finite free complexes in terms of the \emph{conormal free rank} $\mathrm{cf\text{-}rank}(R)$.

\begin{introthm}\label{intro upper bound on dim rad}
For any local ring $R$ and any bounded complex $M$ of finitely generated free modules with $\H(M)\neq 0$, we have
\[
\sum \ell\ell_R \H_n(M) \ges \varepsilon_2(R)- \dim \V_R(R)+1 \ges \mathrm{cf\text{-}rank}(R) +1\,.
\]
\end{introthm}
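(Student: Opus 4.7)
The strategy is to prove the two inequalities separately.

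For the right-hand inequality $\varepsilon_2(R)-\dim \V_R(R) \ges \mathrm{cf\text{-}rank}(R)$, I would simply apply \cref{theorem C} to the complex $M=R$, which yields
\[
\dim \V_R(R) \,\leq\, \varepsilon_2(R) - \rank_k \rad^2(\pi^*(R)).
\]
It then remains to check $\rank_k \rad^2(\pi^*(R)) \ges \mathrm{cf\text{-}rank}(R)$. This is a standard comparison in the theory of homotopy Lie algebras: the free rank of the conormal module $I/I^2$ (for $\widehat{R}=Q/I$) coincides via duality with the $k$-dimension of the \emph{central} subspace of $\pi^2(R)$, and every central element of a graded Lie algebra is automatically radical.

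For the left-hand inequality, my plan is to revisit the proof of \cref{theorem A}. The artinian case stated there, namely $\dim \V_R(M)\ges \varepsilon_2(R)-\ell\ell_R(R)+1$, is the specialization at $M=R$ (with $R$ artinian) of the finer Lusternik--Schnirelmann bound announced in \cref{cor_bound}. Tracking how that argument depends on the complex $M$ rather than only on $R$, one should obtain, for any bounded complex $M$ of finitely generated $R$-modules with $\H(M)\neq 0$, the inequality
\[
\dim \V_R(M) \,\ges\, \varepsilon_2(R) - \textstyle\sum_n \ell\ell_R \H_n(M) + 1,
\]
by combining the LScat/level bound with the standard estimate on level in terms of the Loewy lengths of homology. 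Rearranging gives the desired left-hand inequality, except that $\V_R(M)$ appears in place of $\V_R(R)$. To resolve this I would use that a nonzero perfect complex $M$ generates the same thick subcategory of $\mathsf{D}^f(R)$ as $R$: iterated brutal truncations realize each $R^{a_n}[n]$ in $\mathsf{thick}(M)$, and passing to summands recovers $R$. Subadditivity of $\V_R$ along triangles then yields both containments $\V_R(M)\subseteq \V_R(R)\subseteq \V_R(M)$, so $\dim\V_R(M)=\dim\V_R(R)$.

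The principal obstacle I anticipate is the first step of the second paragraph: extracting from the proof of \cref{theorem A} the refined inequality with $\sum \ell\ell_R \H_n(M)$ on the right, at the correct level of generality so that the artinian statement of \cref{theorem A} is the special case $M=R$. The remaining ingredients --- central-implies-radical, the Loewy/level comparison, and the thick-generation lemma for nonzero perfect complexes --- are more routine.
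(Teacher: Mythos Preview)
Your overall strategy matches the paper's: for the left inequality, chain the Loewy-length/level estimate $\sum_n \ell\ell_R \H_n(M) \ges \level^k_R(M)$ with the right-hand half of \cref{t_bound} to reach $\varepsilon_2(R)-\dim \V_R(M)+1$, then replace $\V_R(M)$ by $\V_R(R)$; for the right inequality, apply \cref{th_radical_support} to $R$ and then compare $\rad^2(\pi^*(R))$ with $\mathrm{cf\text{-}rank}(R)$. One small correction: the LS category plays no role in the left inequality---only the new-intersection-theorem half of \cref{t_bound} is used---so your phrase ``LScat/level bound'' is a misdirection, though the intended ingredients (the level bound and the Loewy/level comparison) are the right ones.

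There is, however, a genuine gap in your justification of $\V_R(M)=\V_R(R)$. It is \emph{not} true that every nonzero perfect complex generates the same thick subcategory as $R$: over any non-artinian local ring the Koszul complex $K^R$ is perfect and nonzero, yet by Hopkins--Neeman $\thick_R(K^R)$ consists only of perfect complexes with finite-length homology and does not contain $R$. Iterated brutal truncations express $M$ as built from the $R^{a_n}[n]$, not the other way round; the triangles they produce do not place any $R^{a_n}[n]$ inside $\thick_R(M)$. The equality $\V_R(M)=\V_R(R)$ for nonzero perfect $M$ does hold, but for a different reason---the paper simply cites it. The containment $\V_R(M)\subseteq \V_R(R)$ is immediate from $M\in\thick_R(R)$ via \cref{rem_thick_support}; the reverse containment requires a separate support-theoretic argument, not supplied by your thick-generation claim.

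A minor point: Iyengar's result shows that free summands of $I/I^2$ give rise to central elements of $\pi^2(R)$, hence only the inequality $\mathrm{cf\text{-}rank}(R)\les \rank_k\rad^2(\pi^*(R))$, not the equality you state. Since the inequality is in the direction you need, your conclusion there is unaffected.
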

 
The inequality between the outer two terms is \cite[Theorem 10.1]{Avramov/Buchweitz/Iyengar/Miller:2010}. Equality can hold throughout,  for example if $R$ is complete intersection and $M$ is the Koszul complex on a minimal set of generators for the maximal ideal of $R$. The inequalities in the theorem above are established using \Cref{upper bound on dim rad,r_cf_rank}, and the right hand inequality is often strict; therefore our result above provides a stronger lower bound on $\sum \ell\ell_R \H_n(M)$ than the previous one from \cite{Avramov/Buchweitz/Iyengar/Miller:2010}. More specifically, a consequence of \Cref{th_radical_support} is that 
\[
\varepsilon_2(R)- \rank_k (\mathrm{span}_k \V_R(R))+1 \ges \mathrm{cf\text{-}rank}(R) +1\,,
\]
and hence when $\V_R(R)$ is not a linear space, \cref{intro upper bound on dim rad} gives a better lower bound than the one in \cite{Avramov/Buchweitz/Iyengar/Miller:2010}.
Taking $R$ to be the group algebra of an elementary abelian $p$-group also recovers a theorem from homotopy theory \cite{Carlsson:1983,Allday/Puppe:1993} (see \cref{r_cf_rank}), although in this case our bound coincides with that of \cite{Avramov/Buchweitz/Iyengar/Miller:2010}.

The Golod property is reflected in the cohomological support varieties, and for these rings we can completely solve the realizability problem stated in \cref{q_realisation}. The following is \Cref{golod}:

\begin{introthm}
Let $R$ be a Golod ring. For any bounded complex $M$ of finitely generated  modules with $\H(M)\neq 0$, the cohomological support variety $\V_R(M)$ is either all of $\mathbb{A}_k^n$ or a (conical) hypersurface, and every hypersurface is indeed a cohomological support variety of some complex.
Moreover, if $R$ is a non-hypersurface ring, then $R$ has full support $\V_R(R)=\mathbb{A}_k^n$.  
\end{introthm}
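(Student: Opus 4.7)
The statement splits into three claims: (a) for every bounded complex $M$ of finitely generated $R$-modules with $\H(M)\neq 0$, the variety $\V_R(M)$ is either all of $\mathbb{A}^n_k$ or a conical hypersurface; (b) every conical hypersurface of $\mathbb{A}^n_k$ arises as some such $\V_R(M)$; and (c) $\V_R(R)=\mathbb{A}^n_k$ whenever $R$ is not a hypersurface. For (a), I would apply Corollary~\ref{cor_bound}, which gives $\dim \V_R(M)\ges \varepsilon_2(R)-\cat(R)$, together with the classical fact that a Golod local ring has $\cat(R) \les 1$---reflecting the freeness of $\pi^{\ges 2}(R)$ as a graded Lie algebra, parallel to the rational homotopy statement that wedges of spheres have Lusternik--Schnirelmann category one. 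This yields $\dim \V_R(M)\ges n-1$; since $\V_R(M)\subseteq\mathbb{A}^n_k$ is conical and $k[\chi_1,\ldots,\chi_n]$ is a UFD, $\V_R(M)$ is either all of $\mathbb{A}^n_k$ or the vanishing locus of a single nonzero homogeneous polynomial, as claimed.

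For (b), given a nonzero homogeneous $g\in k[\chi_1,\ldots,\chi_n]$ of degree $d$, I would realize $V(g)$ as the support of a Carlson-type complex. View $g$ as a cohomology class in $\Ext^{2d}_R(k,k)$ via the canonical map from $k[\chi_1,\ldots,\chi_n]$ into the center of $\Ext^*_R(k,k)$, and let $L_g$ be the cone of the corresponding morphism $k \to k[2d]$ in $\D(R)$. A standard computation shows $\V_R(L_g) = V(g) \cap \V_R(k)$, and since Golod rings have $\V_R(k)=\mathbb{A}^n_k$, we obtain $\V_R(L_g)=V(g)$. Alternatively, one could invoke (a) directly: $\V_R(L_g)\subseteq V(g)$ holds by construction, and by (a) the inclusion is either an equality or $\V_R(L_g)=\mathbb{A}^n_k$, but the latter is ruled out since $V(g)$ is proper.

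For (c), Pollitz's theorem~\cite{Pollitz:2019} gives $\V_R(R)\neq\{0\}$, as $R$ is non-complete-intersection. Combined with (a), the task reduces to excluding that $\V_R(R)$ is a proper conical hypersurface---this is the main obstacle of the theorem. My approach is to exploit the minimal model of a Golod ring: since $\pi^{\ges 2}(R)$ is a free graded Lie algebra, the cohomology module governing $\V_R(R)$ admits a description as a free module over a tensor algebra, and a direct computation should show its annihilator in $k[\chi_1,\ldots,\chi_n]$ is zero, forcing $\V_R(R)=\mathbb{A}^n_k$. The technical heart of the argument will be to produce, for each nonzero homogeneous $g\in k[\chi_1,\ldots,\chi_n]$, a family of nontrivial cohomology classes on which $g$ acts nontrivially, which is where the Golod (rather than merely non-c.i.) hypothesis enters crucially.
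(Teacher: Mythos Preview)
Your arguments for (a) and (b) are essentially the paper's: the codimension bound comes from $\cat(R)\le 1$ for Golod rings (the paper cites \cite[Proposition~9]{Briggs:2018}) together with \cref{t_bound}, and realizability is via the complexes $L_\zeta$ of \cref{ex_koszul}. One correction in (b): there is no canonical map from $\S$ into the \emph{center} of $\Ext^*_R(k,k)$ when $R$ is not complete intersection. As explained in \cref{ex_koszul}, one instead lifts $\zeta\in\S$ along the surjection $\Ext_R(k,k)\twoheadrightarrow\Ext_E(k,k)$, and the resulting $L_\zeta$ already satisfies $\V_R(L_\zeta)=\cV(\zeta)$ for \emph{any} local ring; your appeal to $\V_R(k)=\mathbb{A}^n_k$ ``since $R$ is Golod'' is unnecessary, as that equality holds universally.

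For (c) the paper takes a different and shorter route than your proposed free-Lie-algebra computation. Using the functor $\t=-\lotimes_Q k\colon\D(E)\to\D(\Lambda)$ of \cref{t functor}, the Golod condition gives $\t(K^R)\simeq k\ltimes V$ as a dg $\Lambda$-module with trivial $\Lambda$-action on $V$ (this is the trivial-Massey-product characterization of Golod rings). When $\codepth R\ge 2$ one has $V_2\neq 0$, so $k$ is a direct $\Lambda$-summand of $\t(K^R)$; applying $\h$ and \cref{p_koszulbggsupport} yields $\V_R(R)\supseteq\supp_\S\h(k)=\spec\S$ immediately. Your proposal---to show the $\S$-annihilator of $\Ext_E(\widehat R,k)\cong\H(k[X_{\ge 2}])^\vee$ vanishes by exploiting freeness of $\pi^{\ge 2}(R)$---is a plausible direction, but as written it is a gap: you have not said how to read off the $\S$-action from the minimal model, and the phrase ``a direct computation should show'' hides exactly the nontrivial step. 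The paper's argument avoids this entirely by passing to the $\Lambda$-side, where the Golod hypothesis is literally the triviality of the module structure and the conclusion is a one-line summand argument.
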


This theorem also recovers the fact that if $R$ is Golod but not a hypersurface, then the conormal free rank of $R$ is zero; see \Cref{remark Golod no embedded defs} for details.

Section \ref{s_dg} contains the necessary background on cohomological support varieties, semifree dg algebras, levels, and the homotopy Lie algebra. We prove our lower bound on the dimension of cohomological support varieties in Section \ref{s_bounds}, which leads to our nonrealizability result. In Section \ref{s_HLA} we establish a relationship between the homotopy Lie algebra of $R$ and $\V_R(R)$, and give some applications. We also discuss how this appears to point to a deeper connection between the ring-theoretic properties of $R$, the geometry of $\V_R(R)$, and the structure of the homotopy Lie algebra. Finally, in Section \ref{s_golod} we focus on the case of Golod rings.

\section{Notation and background}\label{s_dg}

\subsection{Fixed notation}\label{notation_s_support}

We begin by introducing a few objects whose notation will remain fixed for the remainder of the paper. Throughout, $R$ is a local ring with chosen minimal Cohen presentation $\widehat{R}=Q/I$. That is, $(Q,\m,k)$ is a regular local ring and $I\subseteq \m^2$. We fix a  list of minimal generators $\f=f_1,\ldots,f_n$ for $I$; the number $n$ of generators will be important throughout.

The Koszul complex on $\f$ over $Q$ will be denoted 
\[
E \coloneqq Q[ e_1,\ldots,e_n\mid \del e_i=f_i]\,.
\]
As an algebra, $E$ is the exterior algebra over $Q$ on  variables $e_1,\ldots,e_n$ in bijection with $f_1,\ldots,f_n$, given homological degree $1$. As a differential graded (henceforth \emph{dg}) algebra, $E$ is also given the differential uniquely determined by $\del( e_i)\coloneqq f_i$.

We will also write
\[
\S\coloneqq k[\chi_1,\ldots,\chi_n]\,,
\]
the polynomial algebra over $k$ on variables $\chi_1,\ldots,\chi_n$ dual to $e_1,\ldots,e_n$, with each $\chi_i$ given cohomological degree $2$ (or homological degree $-2$).

\subsection{Graded support}
Throughout, let $\spec \S$ denote the set of homogeneous primes $\p$ in $\S$ equipped with the Zariski topology, with closed subsets of the form $
\cV(\mathcal{I})\coloneqq\{\p\in \spec \S:\mathcal{I}\subseteq \p\}
$ 
for a homogeneous ideal $\mathcal{I}$ of $\S$. We think of $\Spec^* \S$ as ``homogeneous affine space''; its points correspond to irreducible conical subvarieties of $\mathbb{A}^n_{\overline{k}}$. We denote by $\bm{0}$ the irrelevant maximal ideal $(\chi_1,\ldots,\chi_n)$ of $\S$, thought of as the origin of $\spec \S$.
\label{c_hsupport}

The support of a graded $\S$-module $X$ is 
\[
\supp_\S X\coloneqq \{\p\in \spec \S: X_\p\neq 0\}\,.
\]
When $X$ is finitely generated over $\S$, then $\supp_\S X$ is a closed subset of $\spec \S$: 
\[
\supp_\S X=\cV(\ann_\S X)\,.
\]
Any such support set is {conical} (the closure of a set of lines through the origin), and thus, if it is not zero, may be thought of as a projective variety embedded in $\mathbb{P}^{n-1}_k=\mathrm{Proj}\,\S$, with dimension one less. We work with affine varieties in order to exploit the functoriality of $\Spec^*\S$ with respect to local homomorphisms (as was done in \cite{Briggs/Grifo/Pollitz:2022}), and as well because it is sometimes useful to view $\supp_\S X$ as corresponding to a subset of a \emph{vector space} over $k$. 

We will say a closed set $V \subseteq \spec\S$ is {\bf linear} if $V=\cV(\mathcal{I})$ where $\mathcal{I}$ is an ideal generated by linear forms, that is, elements of $\S^2$.

\subsection{Dg algebras and dg modules}
Let $A=\{A_i\}_{i\in \mathbb{Z}}$ be a dg algebra. We write $\D(A)$ for its derived category of dg $A$-modules. This is regarded as a triangulated category in the standard way, with $\shift$ denoting the suspension functor.  We write $\Df(A)$ for the full subcategory of $\D(A)$
 consisting of those objects $M$ such that $\H(M) $ is finitely generated over $\H(A)$.  The reader is directed to \cite{Avramov/Buchweitz/Iyengar/Miller:2010} for more details.

\begin{ch}\label{dg_basics}
We let $(-)^\natural$ denote the functor that forgets the differential of a graded object. That is to say, if $M$ is a dg module over a dg algebra $A$, then $M^\natural$ is the underlying graded module over the graded algebra $A^\natural$. 

A dg $A$-module $M$ yields a pair of exact functors 
\[
-\lotimes_A M\colon \D(A)\to \D(\mathbb{Z})\quad \text{and}\quad \RHom_A(M,-)\colon \D(A)\to \D(\mathbb{Z})\,,
\]
and we write
\[
\Ext_A(M,-)\coloneqq \H(\RHom_A(M,-))\quad \text{and}\quad \Tor^A(M,-)\coloneqq \H(M\lotimes_A -)\,.
\]
As a special case, a map of dg algebras $\vp\colon A\to B$ yields an adjoint pair
\begin{equation}\label{e_tens_hom_ad}
-\lotimes_A B\colon \D(A)\to \D(B)\quad \text{and}\quad \vp_*\colon \D(B)\to \D(A)\,
\end{equation}
where the second functor is restriction of scalars along $\vp$. For $N$ in $\D(B)$, we typically write $N$ for $\vp_*(N)$. 
\end{ch}

\begin{ch}
\label{finiterep}
When $A$ is nonnegatively graded, there is a map of dg algebras $A\to~\H_0(A)$, and restriction of scalars makes each complex of $\H_0(A)$-module into a dg $A$-module. This will be used without further mention, especially applied to the map $E\to \widehat{R}$.

Assume that $A$ is a nonnegatively graded dg algebra and $\H_0(A)$ is a commutative noetherian ring. By \cite[Appendix~B.2]{Avramov/Iyengar/Nasseh/SatherWagstaff:2019}, for each $M$ in $\Df(A)$ there exists a semifree resolution $F\xra{\simeq} M$ over $A$ with 
\[
F^\natural\cong \bigoplus_{j=i}^\infty \shift^j (A^\natural)^{\beta_j}
\]
for some nonnegative integers $\beta_j $. Let $s=\sup\{j:\H_j(M)\neq 0\}$ and note that as $A$ is nonnegatively graded, the soft truncation 
\[
F'= \cdots \to 0 \to \coker \del^F_{s+1}\to F_{s-1}\to F_{s-2}\to \cdots
\]
is quasi-isomorphic to $M$ as a dg $A$-module. Moreover, $(F')^\natural$ is finitely generated as a graded $A^\natural$-module. 
 \end{ch}

\subsection{Cohomological support}\label{s_support}
In this subsection, we recall the definition of our main objects of study. The theory of support varieties over a local complete intersection ring was defined by Avramov in \cite{Avramov:1989}; see also \cite{Avramov/Buchweitz:2000b} for notable applications of these geometric objects. D.\@ Jorgensen extended this theory in \cite{Jorgensen:2002} to arbitrary local rings using intermediate hypersurface rings; see also \cite{Avramov/Iyengar:2018}. These theories of support are recovered by the cohomological supports in \cref{c_cohsupp}; see \cite[Section~5.2]{Pollitz:2021}.

\begin{ch}
\label{c_cohsupp}
By \cite[Section~2]{Avramov/Buchweitz:2000a}, $\S$ can be identified with a graded $k$-subalgebra of $\Ext_E(k,k)$. Hence for any dg $E$-module $N$, through the composition  pairing 
\[
 \Ext_E(k,N)\otimes_k\Ext_E(k,k)\to  \Ext_E(k,N)\quad \text{given by}\quad \alpha\otimes \beta\mapsto \alpha\beta\,,
\]
$ \Ext_E(k,N) $  is a graded $\S$-module. 
Moreover, for any  $M$ in $\Df(R)$ the graded $k$-space $\Ext_E(k,\widehat{M})$ is a finitely generated graded $\S$-module; see \cite[Proposition~3.2.5]{Pollitz:2019}. 
\end{ch}

\begin{ch}
The \textbf{cohomological support variety} of an object $M$ in $\Df(R)$ is
\[
\V_R(M)\coloneqq \supp_\S \Ext_E(k,\widehat{M})\subseteq \spec \S\,.
\]
\end{ch}

\begin{ch}\label{c_support_symmetry}
According to \cite[Theorem~6.1.6]{Pollitz:2021}, for any $M$ in $\Df(R)$ we have
\[
\V_R(M)= \supp_\S \Ext_E(k,\widehat{M}) = \supp_\S \Ext_E(\widehat{M},k)\,,
\]
this time using the natural left action of $\Ext_E(k,k)$ on $\Ext_E(\widehat{M},k)$ by Yoneda composition. In other words, we may compute cohomological support varieties using the Ext-modules with $\widehat{M}$ in either argument.

It follows from Nakayama's lemma that $\Ext_E(\widehat{M},k) = 0$ only when $M\simeq 0$. This in turn implies that 
\[
\V_R(M)=\varnothing\text{ if and only if } M\simeq 0\,.
\]
\end{ch}

\begin{rem}
The fact that $\V_R(M)$ is independent of the choice of Cohen presentation, and of the minimal generators for the defining ideal of $\widehat{R}$ in such a presentation, is dealt with by \cite[Theorem~6.1.2]{Pollitz:2021}. 
\end{rem}

\begin{ex}
\label{ex_characterization}
By \cite[Theorem~3.3.2]{Pollitz:2019},  $R$ is complete intersection if and only if $\V_R(R)=\{\bm{0}\}$, the origin. In fact, in  \cite[Theorem~6.1.6]{Pollitz:2021}, this equivalence was strengthened to say that $R$ is  complete intersection if and only if $\V_R(M)=\{\bm{0}\}$ for some finitely generated $R$-module $M$. 
\end{ex}

\begin{ex}
One always has $\V_{R}(k)=\spec \S$.
Indeed, $\Ext_E(k,k)$ is a finite rank free graded $\S$-module; cf.\@  \cite[Remark~3.2.6]{Pollitz:2021}.
\end{ex}

\begin{ex}
\label{ex_koszul}
Given a ring of central cohomology operators, one has control on the support of the \emph{Koszul objects} introduced in \cite{Avramov/Iyengar:2007} (see also \cite{Benson/Iyengar/Krause:2008}). In the setting  of the present article,  $\S$ is a ring of central cohomology operators on $\D(E)$ but not $\D(R)$, unless $R$ is complete intersection. Regardless, \cite[Theorem~3.3.4]{Pollitz:2019} established that for any nonzero $\zeta \in \S^d$, with $d$ any nonnegative even integer, there exists an object $L_\zeta$ in $\Df(R)$ which plays the role of a Koszul object in the sense that
\[
\V_R(L_\zeta)=\cV(\zeta)\,. 
\]
That is, each conical hypersurface in $\spec \S$ is realizable as the cohomological support of an object in $\Df(R)$. The objects $L_\zeta$ are defined as 
\[
L_\zeta \coloneqq\mathrm{cone}(k\xra{\tilde{\zeta}}\shift^dk)
\]
in $\Df(R)$, where $\tilde{\zeta}$ is a lift of $\zeta$ along the map
\[
\Ext_R(k,k)\cong \Ext_{\widehat{R}}(k,k)\to \Ext_E(k,k)\,;
\]
the fact this map is surjective is \cite[Theorem~2.3.2]{Pollitz:2019}.
Consequently, if $d>0$, then 
\[  \H_i(L_\zeta)=\begin{cases}
k & i=1,d\\
0 & \text{otherwise}\,. 
\end{cases}
\]
In \cref{golod}, we will see that these hypersurfaces are the only proper subsets of $\spec \S$ that are \emph{always} realizable regardless of $R$; see also \cref{ex_bej}. The complexes $L_\zeta$ are analogous to the  Carlson modules defined over group algebras; cf.~\cite[Section~1.10]{Benson:2017}.
\end{ex}

\begin{ex}
\label{ex_codepth}
Recall $R$ has an {\bf embedded deformation} if there exists a local ring $S$ and an element $f$ in the square of the maximal ideal of $S$ such that $R \cong S/(f)$.
By \cite[Theorem~6.3.5]{Pollitz:2021}, whenever $\codepth(R) \coloneqq \depth (Q) - \depth(R) \les 3$ we have
\[
\V_R(R)=\begin{cases}
\{\bm{0}\} & \text{if } R \text{ is complete intersection}\\
\spec \S & \text{if $R$ does not admit an embedded deformation}\\
\cV(\zeta) & \text{for some }\zeta\in \S^2, \text{otherwise}.
\end{cases}
\]
So when the codepth of $R$ is small, $\V_R(R)$ is always a linear space. In \cite[Example~5.4]{Briggs/Grifo/Pollitz:2022} an example was provided of a non-linear variety. Namely, 
\[
\V_R(R) = \cV(\chi_1\chi_5) = \cV(\chi_1)\cup \cV(\chi_5) \subseteq \spec k[\chi_1,\ldots,\chi_5]
\]
where $R=k[\![x,y,z,w]\!]/(x^2,xy,yz,zw,w^2)$. 
\end{ex}

\begin{ex}
\label{ex_bej}
One can always construct a finitely generated $R$-module $M$ whose support is contained in a hyperplane. To do this, fix a minimal generator $f$ of $I$ of minimal $\m$-adic order, and find a complete intersection ideal $J \supseteq I$ such that $f$ is also a minimal generator of $J$. By \cite[Lemma 4.2]{Briggs/Grifo/Pollitz:2022}, such a $J$ always exists, and by \cite[Lemma 3.6]{Briggs/Grifo/Pollitz:2022}, $\V_R(Q/J)$ is a linear space contained in the hyperplane determined by $f$.
\end{ex}

\subsection{Thick subcategories and levels}\label{s_level}
Here we recall a notion from \cite{Avramov/Buchweitz/Iyengar/Miller:2010} that  counts the number of mapping cones needed to build one object from another in a  triangulated category; see also  \cite[Section~2]{Bondal/VanDenBergh:2003} or \cite[Section~3]{Rouquier:2008}. 

\begin{ch}
\label{c_level}
Let $\mathsf{T}$ be a triangulated category with suspension functor $\shift,$ and fix an object $N$ in $\T$. We write $\thick_\T^0 N=\{0\}$ and $\thick^1_\T N$ for the smallest full additive subcategory of $\T$ containing $\shift^iN$ for all $i$ and closed under direct summands. Then inductively, $\thick_\T^n N$ is the smallest full subcategory of $\T$ closed under direct summands that contains all objects $M$ for which there is an exact triangle 
\[
M'\to M\to M''\to 
\]
with $M'$ in $\thick_\T^1N$ and $M''$ in $\thick_\T^{n-1}N.$ 
We also set 
\[
\thick_\T N\coloneqq \bigcup_{n=0}^\infty \thick_\T^n N\,,
\]
and note that $\thick_\T N$ is exactly the thick closure of $N$ in $\T$; that is, $\thick_\T N$ is the smallest \emph{triangulated} subcategory of $\T$ that is closed under direct summands. For $M$ in $\T$, we define $\level_\T^N M$ to be the smallest integer $n$ such that $M$ belongs to $\thick_\T^nN$; if no such integer exists then $\level_\T^N M=\infty.$
\end{ch}

\begin{notation}
Let $A$ be a dg algebra. 
 When calculating level in $\D(A)$ we will write $\level_A$ instead of $\level_{\D(A)}$.
\end{notation}

\begin{ex}
Recall an $R$-complex $M$ is \textbf{perfect} if it is in $\thick_R R$. That is, $M$ is quasi-isomorphic to a finite free $R$-complex; the latter is a bounded complex of finite rank free $R$-modules. 
\end{ex}

\begin{ex}
In the notation of \cref{ex_koszul}, for each $\zeta\in \S^d$ with $d>0$, satisfies $\level_{R}^k(L_\zeta)=2.$ Indeed, by definition $\level_{R}^k(L_\zeta)\leqslant 2$. Since $\V_R(N)=\spec \S$ for all nonzero $N$ in $\thick_R^1 k$, the desired equality follow the already noted strict containment  $
\V_R(L_\zeta)\subsetneq \spec \S;$ cf.\@ \cref{ex_koszul}. 
\end{ex}

\begin{ch}\label{rem_thick_support}
   Let $R$ be a local ring and let $M,N$ be in $\Df(R)$. By \cite[3.3.2]{Pollitz:2019} if $M$ is in $\thick_R N$ then $\V_R(M)\subseteq \V_R(N)$. It follows that the cohomological support varieties may be thought of as invariants of thick subcategories of $\Df(R)$; see \cref{cor_thick_intersection} for an application of this idea.
\end{ch}


\begin{ch}
\label{c_levelsgodown}
Given an exact functor of triangulated categories $\t\colon \T\to \mathsf{S}$, it follows directly from the definition of level that 
\[
\level_{\mathsf{S}}^{\t N}{\t M}\les \level_\T^NM
\]
for any pair of objects $M,N$ in $\T$. If $\mathsf{t}$ admits a left inverse then equality holds. 

In particular, for any map $\vp\colon A\to B$ of dg algebras, and for any $M,M'$ in $\D(A)$ and $N,N'$ in $\D(B)$, the exact functors in \cref{dg_basics}\cref{e_tens_hom_ad} satisfy inequalities
\[
\level_B^{M\lotimes_A B} (M'\lotimes_A B)\les \level_A^MM'\quad \text{and}\quad \level_A^NN'\les \level_B^NN'\,.
\]
\end{ch}

\subsection{Semifree dg algebras and the LS category}
\label{subsec_dg}
 We recall some necessary facts on semifree dg algebras in the section. Much of the background needed is contained in \cite{Avramov:2010,Briggs:2018}. 

\begin{ch}
A dg  algebra over a commutative ring $T$ is called \textbf{semifree} if it is concentrated in nonnegative homological degrees and, as a graded $T$-algebra, it is isomorphic to the free strictly graded commutative algebra $T[X]$ on a graded set of variables $X = X_1\cup X_2 \cup \cdots$ (the elements of $X_n$ having degree $n$). Explicitly, $T[X]$ is the tensor product of the exterior algebra over $T$ on $X_{\text{odd}}$ with the symmetric algebra over $T$ on $X_{\text{even}}$. 
\end{ch}

\begin{ex}\label{eq_koszul_semi_free}
The Koszul complex $E$ on $\f$ over $Q$ is semifree with variables $X_1=~\{e_1,\ldots,e_n\}$ and $X_i=\varnothing$ for $i\geqslant 2$.
\end{ex}

\begin{ch}
\label{c_semifree} 
A \textbf{semifree resolution} of $\widehat{R}$ over $Q$ is a quasi-isomorphism $Q[X]\to \widehat{R}$ of dg $Q$-algebras where $Q[X]$ is semifree over $Q$. 
Such resolutions may be constructed inductively using the method of Tate, successively adjoining variables to kill homology classes (but using polynomial variables instead of divided power variables); see \cite{Tate:1957} or \cite{Avramov:2010} for details. By \cite[Lemma 7.2.2]{Avramov:2010}, when this resolution is constructed by adjoining the minimal possible number of variables at each stage, the obtained dg-algebra $Q[X]$ has a {\bf decomposable} differential, meaning that $\partial(x)\in (\m, X^2)$ for any $x\in X$. Such a resolution is called a {\bf minimal model} for $R$; by \cite[Proposition 7.2.4]{Avramov:2010} minimal models are unique up to isomorphism.
\end{ch}

\begin{ch}
\label{c_deviations} 
Since $Q[X]$ is well-defined up to isomorphism, the numbers $\varepsilon_i(R) = |X_{i-1}|$ for $i\geqslant 2$ are well-defined invariants of $R$, known as the {\bf deviations} of $R$; by convention $\varepsilon_1(R)= \dim(Q)$ is the embedding dimension of $R$. The Koszul complex $E$ is exactly the first step in the construction of the minimal model of $R$, and therefore $\varepsilon_2(R)=n$, the minimal number of generators of $I$.
\end{ch}

\begin{ch}
\label{c_LScategory}
Lusternik-Schnirelmann category is a numerical invariant having its origins in topology that extends Loewy length to the realm of dg algebras.

Let $A=Q[X]$ be a minimal semifree dg algebra as above, with maximal ideal $\m_A=(\m,X)$. By definition, $\cat(A)$ is the smallest integer $m$ such that there is a dg algebra $B$ and a diagram
    \begin{equation}
        \label{eq_LS_diagram}
    \begin{tikzcd}
    A \ar[r,"\iota"] & B \ar[d,"\simeq","\tau"'] \ar[r,"\rho"]  & A \\
    & A/\m_A^{m+1}
    \end{tikzcd}
    \end{equation}
    such that $\rho\iota={\rm id}_A$ and $\tau\iota=\pi$, 
    the natural projection from $A$ to $A/\m_A^{m+1}$.  If there is no such integer, then $\cat(A)=\infty$. This definition first appears in \cite{Felix/Halperin:1980}, where it is given as a characterization of the rational LS category of a simply connected space $Y$, taking $A$ to be the minimal Sullivan model associated to $Y$.

   We remark that (\ref{eq_LS_diagram}) implies that the inclusion $\m_A^{m+1} \hookrightarrow \m_A$ is a nullhomotopic chain map; we will use this particular consequence of the definition to prove \cref{th_radical_support}.
\end{ch}

\begin{ch}
\label{c_LScat_functors}
Adopting the notation from \cref{c_LScategory}, restriction along the maps in 
(\ref{eq_LS_diagram}) yields exact functors of triangulated categories 
    \begin{equation}
        \label{embedding_lemma}
    \begin{tikzcd}[column sep = 20mm]
    \D(A) \ar[r,shift left = 1mm, "F= (\tau_*)^{-1}\rho_*"] & \ar[l,shift left = 1mm, "\pi_*"] \D(A/\m_A^{\cat(A)+1}) \quad \text{ with } \quad \pi_*F={\rm id}\,.
    \end{tikzcd}
    \end{equation}
This observation will be used in the proof of \cref{l_levelvscat}.
\end{ch}

\begin{ch}\label{c_fiveauthorbound}
We associate an important numerical invariant to any local ring $R$  as in \cite{Felix/Halperin/Jacobsson/Lofwall/Thomas:1988} by passing to a semifree resolution and using the Lusternik-Schnirelmann category for dg algebras.

To be precise, let $Q[X]$ be a minimal model for $R$ as in \ref{c_semifree}, and set \[A\coloneqq k[X]=k\otimes_Q Q[X]\,.\] Then the \textbf{LS category} of $R$ is by definition the integer
\[
\cat(R) \coloneqq \cat(A)\,.
\]
A priori the LS category may be infinite, but by \cite[Lemma 2.3]{Bogvad/Halperin/1986} we have
\begin{equation}
\label{codepth_inequality}
\cat(R) \leqslant \codepth(R)\,,
\end{equation}
and in particular $\cat(R)$ is always finite. For artinian rings LS category is, by \cite[Proposition 2.4]{Bogvad/Halperin/1986}, also bounded by the Loewy length of $R$:
\[
\cat(R) \leqslant \ell\ell(R)\,.
\]
This inequality is a special case of the \emph{mapping theorem}; see  \cite[Theorem I]{Felix/Halperin:1980} and \cite[Theorem 20]{Briggs:2018}. Moreover, the inequality in \cref{codepth_inequality} is strict when $R$ is not complete intersection. 
\end{ch}

\subsection{The homotopy Lie algebra}\label{s_HLA_background}
To each local ring $R$, one can associate a graded Lie algebra $\pi^*(R)$ over $k$, known as the \textbf{homotopy Lie algebra of $R$}, and its radical subspace
\[
\rad(\pi^*(R)) \subseteq \pi^*(R)\,.
\]
\begin{ch}
Fix a minimal model $Q[X]$ for $R$ as defined in \cref{c_semifree}.  Following \cite{Avramov/Halperin:1987} we set
\[
\pi^i(R)\coloneqq (kX_{i-1})^\vee \quad \text{for}\quad i\geqslant 2\,,
\]
where $(-)^\vee$ denotes $k$-space duality. Often we will think of elements of $\pi^i(R)$ as functionals on $kX$, taking the value zero on $X_j$ when $j\neq i-1$.  We note that $\rank_k\pi^i(R)=\varepsilon_i(R)$ by definition (see \cref{c_deviations}), and therefore one can think of  $\pi^*(R)$ as an algebraic object that enriches the deviations of $R$.

A Lie bracket can be defined on this graded vector space using the fact that the differential of $Q[X]$ is decomposable using the following recipe. Given $\alpha \in \pi^i(R)$ and $\beta\in \pi^j(R)$, we need to define $[\alpha,\beta]\in \pi^{i+j}(R)=( X_{i+j-1})^\vee$. To do that, let $x\in X_{i+j-1}$ and suppose that \[\partial(x)=\sum_{pq} a_{pq} y_pz_q \mod (\m + (X)^3)\,,\] with $a_{pq}\in Q$ and $y_p,z_q\in X$. The bracket $[\alpha,\beta]$ is then determined by
\[
[\alpha,\beta]( x) \coloneqq \sum_{pq} \overline{a}_{pq}\big( (-1)^{i+1+ij} \alpha( y_p) \beta( z_q) +(-1)^{j} \beta(y_p) \alpha( z_q)\big)
\]
with $\overline{a}_{pq}$ denoting the class of ${a}_{pq}$ in $k$; cf.~\cite[page 175]{Avramov/Halperin:1987}. This makes $\pi^{\ges 2}(R)$ into a graded Lie algebra over $k$. 

We have only defined the homotopy Lie algebra in degrees $2$ and above. The full homotopy Lie algebra, including $\pi^1(R)$, can be defined along similar lines, cf.~\cite{Avramov:2010} or \cite{Briggs:2018}.

When $k$ has characteristic $2$ a graded Lie algebra should possess a squaring operation on odd degree elements, in addition to its bracket. Moreover in characteristic $2$ or $3$ care has to be taken to state the Jacobi identity in full. These technical points do not affect the constructions of this paper, and so we refer to \cite[Remark~10.1.2]{Avramov:2010} for details.
\end{ch}

\begin{ch}
\label{c_identification}
The homotopy Lie algebra in degree two admits a simple description. Keeping in mind that the Koszul complex $E=Q[X_1]$, see  \cref{eq_koszul_semi_free}, is  the first step in the construction of $Q[X]$, we a have canonical isomorphism
\[
    \pi^2(R) = (kX_1)^\vee = \mathrm{span}_k\{e_1^\vee,\ldots,e_n^\vee\} \xrightarrow{\ \cong\ } (I/\m I)^\vee\,,\ \quad e_i^\vee\mapsto f_i^\vee\,.
\]
\end{ch}

\begin{ch}
For any element $\alpha \in \pi^i(R)$ we consider the adjoint action
\[
\ad(\alpha)\coloneqq[\alpha, -] \colon \pi^*(R) \to \pi^{*+i}(R)\,.
\]
We say that $\alpha$ is {\bf central} if $\ad(\alpha)=0$, and {\bf radical} if $\ad(\alpha)^p=0$ for some $p$. The radical of $\pi^*(R)$ is the set of all radical elements
\[
\rad^*(\pi^*(R)) \coloneqq \{ a\in \pi^*(R) \mid \ad(\alpha)^p=0 \text{ for some }p\}\,.
\]
It follows from the results of  \cite{Felix/Halperin/Jacobsson/Lofwall/Thomas:1988} that $\rad^*(\pi^*(R))$ is a graded Lie ideal in $\pi^*(R)$; in fact it is the maximal solvable ideal. We will be especially interested in radical elements living in degree $2$, which we will denote by $\rad^2(\pi^*(R))$. 
\end{ch}

\section{Lower bounds on cohomological support}\label{s_bounds}

 The goal here is to show there is a uniform lower bound on $\dim \V_R(M)$, for all nonzero $M$ in $\Df(R)$, based on  invariants of $R$; see \cref{t_bound}.

First we use a well-traversed bridge from homological algebra over $R$ to homological algebra over an exterior algebra, exploited in \cite{Avramov/Buchweitz/Iyengar/Miller:2010,Avramov/Iyengar:2010,Iyengar/Pollitz/Sanders:2022,Liu/Pollitz:2021} to name a few. This is a dg version of the celebrated BGG correspondence~\cite{Bernstein/Gelfand/Gelfand:1978}; see \cite[Section~7]{Avramov/Buchweitz/Iyengar/Miller:2010}. 
\begin{construction}
\label{t functor} 
We use the notation from \ref{notation_s_support}, and in particular $E$ is the Koszul complex on $\f$ over $Q$. We also write $\Lambda\coloneqq E\otimes_Q k$, the  exterior algebra over $k$ on $n$ variables of homological degree $1$. Consider the functor
\[
\t\colon\D(E)\longrightarrow \D(\Lambda)
\]
given by  $\t\coloneqq -\lotimes_Q k$. That is, for an object $N$ in $\D(E)$ we have
\[
\t (N)\simeq F\otimes_Q k\,,
\]
where $F\xra{\simeq} N$ is a quasi-isomorphism of dg $E$-modules, with $F^\natural$ free as a $Q$-module.  
Note $\t$ restricts to a functor $\Df(E)\to \Df(\Lambda)$, also denoted $\t$, since each object of $\Df(E)$ is perfect over $Q$.

\label{c_bgg}
The second functor of interest is the equivalence from \cite{Avramov/Buchweitz/Iyengar/Miller:2010}:
\[
\h\colon \Df(\Lambda)\xra{\ \equiv\ } \Df(\S)
\]
given by $\h=\RHom_\Lambda(k,-)$. In $\D(\S)$ there is an isomorphism $
\h(k)\simeq \S$, 
and so, using \cref{c_levelsgodown}, for any dg $\Lambda$-module $N$, the following equality is satisfied
\[
\level_\Lambda^k (N)=\level_\S^\S(\h N)\,.
\]
\end{construction}

\begin{lemma}
\label{tk_trivial}
In the notation from \cref{t functor}, $\mathsf{t}(k)\simeq \bigwedge \shift k^{\varepsilon_1(R)}$ in $\D(\Lambda)$.
\end{lemma}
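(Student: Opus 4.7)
The plan is to exhibit an explicit $Q$-free dg $E$-module quasi-isomorphic to $k$ and then apply $\t$ to it by tensoring with $k$ over $Q$. Let $x_1, \ldots, x_d$ be a regular system of parameters of $Q$, so that $d = \varepsilon_1(R) = \dim Q$, and let $K = Q[y_1, \ldots, y_d \mid \del y_i = x_i]$ denote the Koszul dg algebra, the minimal $Q$-free resolution of $k$. Because each $f_i$ lies in $\m^2 = \m \cdot (x_1, \ldots, x_d)$, we may write $f_i = \sum_l g_{il} x_l$ with every $g_{il} \in \m$, and the assignment $e_i \mapsto \sum_l g_{il} y_l$ then extends to a map of dg $Q$-algebras $\varphi\colon E \to K$: the chosen image elements have the required differentials $f_i$, and the relations $e_ie_j+e_je_i=0$ transfer automatically since $K$ is graded commutative. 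Restriction of scalars along $\varphi$ turns $K$ into a dg $E$-module, and $K \simeq k$ in $\D(E)$, since $\H(K)=k$ is concentrated in degree zero and the induced $E$-action there is forced to be the augmentation.

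Because $K$ is $Q$-free, $\t k = k \lotimes_Q k$ is represented in $\D(\Lambda)$ by the ordinary tensor product $K \otimes_Q k$, equipped with the $\Lambda$-structure induced from $\varphi$. The differential on $K \otimes_Q k$ vanishes because $\del y_i = x_i \in \m$, so as a complex it is simply $\bigwedge \shift k^d$. Moreover, the induced action of each $e_i$ is left multiplication by the image of $\varphi(e_i) = \sum_l g_{il} y_l$ in $K/\m K$, which is zero because every $g_{il}$ lies in $\m$. Hence the $\Lambda$-action factors through the augmentation $\Lambda \to k$, yielding $\t k \simeq \bigwedge \shift k^{\varepsilon_1(R)}$ in $\D(\Lambda)$, as claimed.

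The crux of the argument is the existence of the dg $Q$-algebra lift $\varphi\colon E \to K$ whose image lands in $\m K$; this is possible precisely because $\f \subseteq \m^2$, a hypothesis hard-wired into the Cohen presentation of $R$. Once $\varphi$ is constructed, the remaining steps reduce to routine manipulation of the explicit complex $K \otimes_Q k$.
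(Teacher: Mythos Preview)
Your proof is correct and follows essentially the same approach as the paper's own argument. Both resolve $k$ by the Koszul complex $K^Q$ on a regular system of parameters, use $\f\subseteq\m^2$ to obtain a lift $E\to K^Q$ landing in $\m K^Q$, and then observe that tensoring with $k$ over $Q$ kills both the differential and the induced $\Lambda$-action; the only difference is that you construct the lift $\varphi$ explicitly via $e_i\mapsto \sum_l g_{il}y_l$, whereas the paper invokes a reference for the factorization $E\to\m K^Q\to K^Q$.
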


\begin{proof}
Since $Q$ is regular, there is a quasi-isomorphism of dg $Q$-algebras $K^Q\xra{\simeq}k$. The dg algebra $K^Q$ obtains a dg $E$-algebra structure by choosing a lift of the canonical map $E\to k$ 
along this quasi-isomorphism. Since $\f\subseteq \m^2$, any such lift $E\to K^Q$ will factor in degree $1$ as
\[
E_1\to \m K_1^Q\to K_1^Q\,;
\]
   see, for example, \cite[Section~2.2]{Pollitz:2019}. By the definition of $\mathsf{t}$, the action of $\Lambda_1 = E_1\otimes_Qk$ on $\mathsf{t}(k)\simeq  k\otimes_QK^Q$  factors through the zero map \[k\otimes_Q \m K_1^Q\to k\otimes_QK_1^Q\,,\] and therefore $\mathsf{t}(k)$ is isomorphic to a direct sum of shifted copies of $k$ in $\D(\Lambda)$.
\end{proof}

\begin{proposition}
\label{p_koszulbggsupport}
For each $M$ in $\Df(R)$, the following equalities are satisfied:
\[
\V_R(K^M)=\V_R(M)=\supp_\S \H(\h\t M)\,.
\]
\end{proposition}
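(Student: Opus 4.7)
The proof splits naturally along the two equalities. The first is a clean Koszul/long-exact-sequence argument; the second requires change-of-rings manipulation and is where I expect the main difficulty.

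Part 1 ($\V_R(K^M) = \V_R(M)$): For each $x\in\m$, the exact triangle $N\xrightarrow{x} N \to \mathrm{cone}(x) \to \shift N$ in $\D(E)$ induces a long exact sequence in $\Ext_E(k,-)$. The induced action of multiplication-by-$x$ on $\Ext_E(k, N)$ vanishes: for any $\phi\colon k\to N$ in $\D(E)$, $E$-linearity gives $x\circ\phi = \phi\circ x|_k = 0$, since $x \in \m \subseteq \m_E$ acts as zero on $k$. Hence the LES decomposes into short exact sequences
\[
0 \to \Ext_E^i(k, N) \to \Ext_E^i(k, \mathrm{cone}(x)) \to \Ext_E^{i+1}(k, N) \to 0\,,
\]
so $\supp_\S\Ext_E(k, \mathrm{cone}(x)) = \supp_\S\Ext_E(k, N)$. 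Writing $K^{\widehat M} = \widehat M\otimes_R K^R$ as an iterated cone of multiplications by minimal generators $x_1,\ldots,x_{\varepsilon_1(R)}$ of $\m$ and iterating yields $\V_R(K^M) = \V_R(M)$.

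Part 2 ($\V_R(M) = \supp_\S\H(\h\t M)$): I would establish the chain
\[
\V_R(M) = \supp_\S\Ext_E(k, \widehat M) \overset{(a)}{=} \supp_\S\Ext_E(k, \t\widehat M) \overset{(b)}{=} \supp_\S\Ext_\Lambda(k, \t\widehat M) = \supp_\S\H(\h\t M)\,.
\]
Equality (b) follows from the adjunction $-\lotimes_E\Lambda \dashv \varphi_*$ for $\varphi\colon E\to\Lambda$, which yields $\RHom_E(k, \t\widehat M) \simeq \RHom_\Lambda(\t k, \t\widehat M)$; combined with \cref{tk_trivial} that $\t k$ is a finite direct sum of shifts of $k$ in $\D(\Lambda)$, this exhibits $\Ext_E(k, \t\widehat M)$ as a finite direct sum of shifts of $\Ext_\Lambda(k, \t\widehat M) = \H(\h\t M)$ as graded $\S$-modules, hence with the same $\S$-support.

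The main obstacle is equality (a): $\supp_\S\Ext_E(k, \widehat M) = \supp_\S\Ext_E(k, \t\widehat M)$. Take a semifree $E$-resolution $F\xra{\simeq}\widehat M$ with $F^\natural$ finitely generated (\cref{finiterep}), so $F$ is $Q$-flat. The short exact sequence of dg $E$-modules $0\to \m F\to F\to \t F\to 0$ (from $0\to\m\to Q\to k\to 0$ tensored with the $Q$-flat $F$) iterates to a filtration whose associated graded $\bigoplus_i(\m^i/\m^{i+1})\otimes_k\t F$ is a direct sum of copies of $\t F$. The resulting spectral sequence, combined with completeness of $F^\natural$ over the complete regular local ring $Q$ (so Krull's intersection theorem gives $\bigcap_i \m^i F = 0$), transfers the support equality from the associated graded to $F$ itself. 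The delicate point is tracking the $\S$-module structure through the limit process, for which the finite generation of $\Ext_E(k, \widehat M)$ over $\S$ (\cref{c_cohsupp}) is essential.
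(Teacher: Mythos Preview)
Your Part 1 is essentially the paper's argument: the paper states the slightly stronger isomorphism $\Ext_E(k,\widehat{K^M})\cong \Ext_E(k,\widehat{M})\otimes_k \bigwedge \shift k^{\varepsilon_1(R)}$ of graded $\S$-modules, but both versions rest on the same iterated-cone computation and the vanishing of $x\in\m$ on $\Ext_E(k,-)$.

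For Part 2 the paper simply cites an external reference, so your attempt at a self-contained argument is doing more than the paper does. Your step (b) is fine: the adjunction plus \cref{tk_trivial} gives $\Ext_E(k,\t\widehat M)$ as a finite sum of shifts of $\H(\h\t\widehat M)$, and the $\S$-actions are compatible because the map $E\to\Lambda$ induces $\S=\Ext_\Lambda(k,k)\hookrightarrow \Ext_E(k,k)$.

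However, you are working much too hard on step (a). The filtration/spectral-sequence route you sketch is not wrong, but the convergence issue you flag is genuine and entirely avoidable. The key observation you are missing is that
\[
\t\widehat M \;=\; \widehat M \lotimes_Q k \;\simeq\; \widehat M \otimes_Q K^Q \;=\; K^{\widehat M}\quad\text{in }\D(E),
\]
since $Q$ is regular and $K^Q\xra{\simeq}k$. Thus $\Ext_E(k,\t\widehat M)\cong \Ext_E(k,\widehat{K^M})$, and your step (a) is \emph{literally} the first equality $\V_R(K^M)=\V_R(M)$ that you already proved in Part 1. Once you see this, the whole proposition collapses to Part 1 together with the adjunction in (b), and the ``delicate point'' disappears.
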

\begin{proof}
First note 
\[
\Ext_E(k,\widehat{K^M})\cong \Ext_E(k,\widehat{M})\otimes_k \bigwedge \shift k^{\varepsilon_1(R)}
\]
as graded $\S$-modules. Indeed, $K^M$ is an iterated mapping cone on a minimal generating set for $\m$ starting from $M$, and so the isomorphism holds by induction; see also \cite[Lemma~3.2.4]{Pollitz:2019}. From this isomorphism, the first equality holds. The second equality is from \cite{Iyengar/Pollitz/Sanders:2022}.
\end{proof}

\begin{chunk}
\label{c_closedsubsets}
Fix  a closed subset $\mathcal{U}$ of  $\spec \S$. The dimension of $\mathcal{U}$ is  
\[
\dim \mathcal{U}\coloneqq\dim \S/\mathcal{I}
\]
where $\mathcal{U}=\cV(\mathcal{I})$ for a homogeneous ideal $\mathcal{I}$ of $\S$, and the right-hand side is the Krull dimension of $\S/\mathcal{I}.$  The codimension of $\mathcal{U}$ is 
\[
\codim\mathcal{U}\coloneqq\dim \S-\dim \mathcal{U}\,,
\]
and the height of $\mathcal{I}$ is 
\[
\height \mathcal{I}\coloneqq\inf\{n \ges 0: \p_0\subseteq\ldots \subseteq\p_n\text{ in }\spec \S\text{ with }\p_n\supseteq \mathcal{I}\}\,.
\]
Since $\S$ is Cohen-Macaulay, by \cite[Corollary~2.1.4]{Bruns/Herzog:1998} we have
\[
\codim\mathcal{U}=\height \mathcal{I}\,.
\]
\end{chunk}

\begin{chunk}
\label{c_newintersection}
For any dg $\S$-module $X$, the \emph{dg new intersection theorem} states: 
\[
\level_\S^\S X\ges \height(\ann_\S \H(X))+1\,.
\]
This is stated and proved in \cite[Theorem~5.1]{Avramov/Buchweitz/Iyengar/Miller:2010} when is $\S$ is a graded algebra over a field,  as it is in this article. However, we record that in light of the work of Andr\'{e}~\cite{Andre:2018}, see also \cite{Bhatt:2018}, the assumption that this graded ring contain a field is no longer necessary. 
\end{chunk}

\begin{lemma}
\label{l_levelvscat}
Let $(R,\m,k)$ be a local ring and let $K^R$ be the Koszul complex on a minimal generating set for $\m$. For  $N$  in $\Df(K^R)$, we have
\[
\level_{K^R}^k N\les \cat(R)+1\,.
\]
\end{lemma}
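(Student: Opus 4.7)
The plan is to pivot from $K^R$ to the minimal semifree dg $k$-algebra $A=k[X]=k\otimes_Q Q[X]$ of \cref{c_fiveauthorbound}, where the LS category of $R$ is defined. For this, I would use the zigzag of dg-algebra quasi-isomorphisms
\[
K^{\widehat R}\;\xleftarrow{\ \simeq\ }\; K^Q\otimes_Q Q[X]\;\xrightarrow{\ \simeq\ }\; A,
\]
where $K^Q$ is the Koszul complex on a regular system of parameters of $Q$ (a $Q$-free resolution of $k$); the leftward map comes from tensoring $Q[X]\xrightarrow{\simeq}\widehat R$ with $K^Q$, and the rightward map from tensoring $K^Q\xrightarrow{\simeq} k$ with $Q[X]$. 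Restriction and extension of scalars along these quasi-isomorphisms are inverse equivalences preserving the residue field $k$, and the equality case of \cref{c_levelsgodown}, together with the fact that completion $R\to\widehat R$ does not affect the level of an object of $\Df(K^R)$, reduces the lemma to showing
\[
\level_A^k M\les \cat(A)+1 \quad\text{for every } M\in\Df(A).
\]

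Next I would set $m=\cat(A)$ and invoke the functors $F\colon\D(A)\to\D(A/\m_A^{m+1})$ and $\pi_*\colon\D(A/\m_A^{m+1})\to\D(A)$ of \eqref{embedding_lemma} supplied by the LS diagram \eqref{eq_LS_diagram}, which satisfy $\pi_* F=\id$. Applying \cref{c_levelsgodown} to the projection $\pi\colon A\to A/\m_A^{m+1}$ gives
\[
\level_A^k M\;=\;\level_A^k \pi_*(FM)\;\les\;\level_{A/\m_A^{m+1}}^k FM,
\]
so the problem is further reduced to the bound $\level_{A/\m_A^{m+1}}^k FM\les m+1$.

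For this last step I would exploit the nilpotence of $\bar\m\coloneqq\m_A/\m_A^{m+1}$. Choose a semifree resolution $G\xrightarrow{\simeq} FM$ over $A/\m_A^{m+1}$ with $G^\natural$ finitely generated (via \cref{finiterep}). Since $\H(A)=\Tor^Q(k,\widehat R)$ is finite-dimensional over $k$ and $\H(FM)\cong \H(M)$, the homology $\H(G)$ is finite-dimensional. Consider the $\bar\m$-adic filtration
\[
G\;\supseteq\;\bar\m G\;\supseteq\;\bar\m^2 G\;\supseteq\;\cdots\;\supseteq\;\bar\m^{m+1}G=0,
\]
a finite filtration by sub-dg-modules with $m+1$ successive subquotients $\bar\m^i G/\bar\m^{i+1}G$. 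Each subquotient is annihilated by $\bar\m$, so it is a dg $k$-module; by iterating the long exact homology sequence (using $\bar\m^{m+1}G=0$) each subquotient also has finite-dimensional homology, hence is quasi-isomorphic to a finite direct sum of shifts of $k$ and so lies in $\thick^1_{A/\m_A^{m+1}} k$. Iterating the exact triangles coming from this filtration then places $G$ (and therefore $FM$) in $\thick^{m+1}_{A/\m_A^{m+1}} k$, completing the proof.

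The main obstacle I anticipate is the reduction step: verifying that the zigzag of dg-algebra quasi-isomorphisms preserves levels (invoking the equality case of \cref{c_levelsgodown}), and that completion $R\to\widehat R$ does not alter the level of an object of $\Df(K^R)$. Once these technicalities are in place, the rest is a direct unwinding of the definition of $\cat(A)$ through the LS diagram together with the finite $\bar\m$-adic filtration.
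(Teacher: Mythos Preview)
Your proof follows essentially the same strategy as the paper's: reduce from $K^R$ to $A=k[X]$ via the zigzag $K^{\widehat R}\xleftarrow{\simeq} K^Q\otimes_Q Q[X]\xrightarrow{\simeq} A$, then use the LS diagram to pass to $C=A/\m_A^{m+1}$ via the functor $F$ of \cref{c_LScat_functors}, and finally filter a suitable representative of $FM$ by powers of $(X)$ to obtain the level bound. The reductions are handled exactly as in the paper.

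There are, however, two technical slips in the final step. First, the representative supplied by \cref{finiterep} is \emph{not} a semifree resolution: it is the soft truncation $F'$ of one, and in particular it is \emph{bounded}. This boundedness is essential. A genuinely semifree $G$ with $G^\natural$ finitely generated over $C^\natural$ would be unbounded above (since $C$ has variables in arbitrarily high degree whenever $R$ is not complete intersection), and then the subquotients $\bar\m^iG/\bar\m^{i+1}G$ can be infinite-dimensional; already for $G=C$ and $i=1$ the subquotient $(X)/(X)^2\cong kX$ is infinite-dimensional, with zero differential. Second, your appeal to ``iterating the long exact homology sequence'' to deduce that the subquotients have finite-dimensional homology does not work: knowing only that $\H(G)$ is finite-dimensional, a long exact sequence gives no control over $\H(\bar\m G)$ or $\H(G/\bar\m G)$ individually, and there is nothing to induct on from the other end either.

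The paper instead observes directly that the representative $F'$ from \cref{finiterep} is bounded and degreewise finite-dimensional over $k$ (since $C^\natural$ is degreewise finite-dimensional), hence $F'$ is finite-dimensional, and therefore each $(X)^iF'/(X)^{i+1}F'$ is a finite-dimensional graded $k$-space lying in $\thick^1_C k$. Once you replace ``semifree resolution'' by ``the bounded representative from \cref{finiterep}'' and use this direct finiteness argument in place of the long exact sequence, your proof coincides with the paper's.
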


\begin{proof}
First, note that the level and LS-category in the statement are unchanged when we pass to the completion, so we can reduce to the case when $R$ is complete. Fix a minimal Cohen presentation $Q\to R$ and a minimal model $Q[X]\xra{\simeq} R.$ The quasi-isomorphisms of dg algebras 
\[
K^R \xleftarrow{\simeq} K^Q\otimes_Q Q[X]\xra{\simeq} A\coloneqq k\otimes_Q Q[X]
\]
define an equivalence of categories $\D(K^R)\equiv \D(A)$; under this equivalence $k$ is sent to $k$ as the quasi-isomorphisms above respect the augmentation maps to $k$.  Hence, by \cref{c_levelsgodown} it suffices to show  $\level_A^k M\les \cat(R)+1$ for each $M$ in $\Df(A)$.

Set $C=A/(X)^{\cat(R)+1}$ and consider the functor
\[
\D(A)\to \D(C)\,
\]
described in \Cref{c_LScat_functors}. By \cref{c_LScat_functors}\cref{embedding_lemma}, this functor admits a left inverse, and so we have further reduced the problem to showing that for each $M$ in $\Df(C)$ there is an inequality
\[
\level^k_C M\les \cat(R)+1.
\]

To this end, as $M$ is in $\Df(C)$ we can assume the underlying module $M^\natural$ obtained from $M$ by forgetting the differential is finitely generated over the graded algebra $C^\natural$; see \cref{finiterep}. Consider the sequence of dg $C$-modules
\[
0=(X)^{\cat(R)+1}M\subseteq \ldots \subseteq (X)^2M\subseteq (X)M\subseteq M\, .
\]
The fact that these are dg $C$-submodules of $M$ uses that $Q[X]$ was chosen as a minimal model, and so $C$ satisfies $\del(X)\subseteq (X)^2$. 
Since $M^\natural$ is finitely generated over $C^\natural$, each quotient
\[(X)^iM/(X)^{i+1}M\]
is a finite dimensional graded $k$-space and consequentially the desired inequality follows.
\end{proof}

We now arrive at the main result of the section. 
The lower bound in it takes inspiration from  \cite[Theorem~7]{Avramov/Buchweitz/Iyengar/Miller:2010}; see also \cite[Theorem~9.1]{Avramov/Iyengar:2010}.

\begin{theorem}
\label{t_bound}
Let $R$ be a local ring with residue field $k$. If $M$ is a nonzero object in $\Df(R)$, then the following inequalities are satisfied:
\[
\cat(R)+1 \ges \level_{K^R}^k (K^M)\ges \varepsilon_2(R)-\dim \V_R(M)+1\,.
\]
\end{theorem}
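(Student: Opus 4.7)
The first inequality $\cat(R)+1 \ges \level_{K^R}^k(K^M)$ is immediate from Lemma \ref{l_levelvscat} applied to $N = K^M$. My plan for the second inequality is to chain level comparisons through the composition
\[
\D(K^R) \xra{\equiv} \D(A) \xra{\iota^*} \D(\Lambda) \xra[\equiv]{\h} \D(\S)
\]
and then invoke the dg new intersection theorem (\ref{c_newintersection}).

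After passing to the completion (which changes neither side), I fix a minimal model $Q[X]\xra{\simeq} R$ and set $A = k[X]$. The first arrow is the equivalence $\D(K^R) \equiv \D(A)$ coming from the zigzag of quasi-isomorphisms $K^R \xleftarrow{\simeq} K^Q \otimes_Q Q[X] \xra{\simeq} A$ used in the proof of Lemma \ref{l_levelvscat}; since this equivalence sends $k$ to $k$, it preserves levels with respect to $k$. Because $Q[X]$ is minimal, $\Lambda = k[X_1] = E \otimes_Q k$ sits inside $A$ as a dg subalgebra, and restriction along this inclusion gives the functor $\iota^*$, which is level-decreasing by \ref{c_levelsgodown}. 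The third arrow is the BGG equivalence from \ref{c_bgg}, which satisfies $\h k \simeq \S$ and hence also preserves $k$-levels.

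The central technical step is to trace $K^M$ through this sequence and identify its image in $\D(\S)$. Starting from $K^M \simeq K^Q \otimes_Q M$ as a dg $K^Q \otimes_Q Q[X]$-module and choosing a $Q[X]$-semifree resolution $P \xra{\simeq} M$ (so that $P$ is $Q$-free), a direct computation gives
\[
K^M \lotimes_{K^Q \otimes_Q Q[X]} A \;\simeq\; (K^Q \otimes_Q P) \otimes_{K^Q} k \;\simeq\; P \otimes_Q k
\]
as a dg $A$-module. Restricting further along $\iota$ recovers the standard $\t M$ in $\D(\Lambda)$, and applying $\h$ produces $\h \t M$, whose support is $\V_R(M)$ by Proposition \ref{p_koszulbggsupport}.

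Combining these observations with the dg new intersection theorem,
\[
\level_{K^R}^k(K^M) \;\ges\; \level_\S^\S(\h \t M) \;\ges\; \height(\ann_\S \H(\h \t M)) + 1 \;=\; \codim \V_R(M) + 1\,,
\]
and the identity $\codim \V_R(M) = \varepsilon_2(R) - \dim \V_R(M)$ completes the argument. The main obstacle is the direct computation identifying the image of $K^M$ in $\D(A)$ with the (lift of the) standard $\t M$; once this is in hand, the remaining steps are routine applications of the framework set up in Section \ref{s_dg}.
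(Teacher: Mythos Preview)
Your argument is correct, but it routes through $\D(\Lambda)$ differently from the paper. The paper's proof restricts along $R\to K^R$ (dropping to $\D(R)$ and losing an equivalence), applies $\t$ to land on $\t K^M\in\D(\Lambda)$, and then needs Lemma~\ref{tk_trivial} to replace the test object $\t k\simeq\bigwedge\shift k^{\varepsilon_1(R)}$ by $k$; finally it invokes Proposition~\ref{p_koszulbggsupport} with $K^M$ in place of $M$ to identify $\supp_\S\H(\h\t K^M)$ with $\V_R(M)$. You instead reuse the equivalence $\D(K^R)\equiv\D(A)$ already set up for Lemma~\ref{l_levelvscat}, restrict once along the dg subalgebra inclusion $\Lambda=k[X_1]\hookrightarrow k[X]=A$, and land directly on $\t M$ with test object $k$ preserved throughout; Proposition~\ref{p_koszulbggsupport} then applies in its stated form. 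Your route is a bit cleaner in that it avoids Lemma~\ref{tk_trivial} and the double application of Proposition~\ref{p_koszulbggsupport}, at the cost of the explicit identification of the image of $K^M$ in $\D(A)$, which you carry out correctly (the key point being that $K^Q\otimes_QP$ is semifree over $K^Q\otimes_QQ[X]$ whenever $P$ is semifree over $Q[X]$, so the derived base change to $A$ is the honest tensor product $P\otimes_Qk$). Both arguments reach the same inequality $\level_{K^R}^k(K^M)\ges\level_\S^\S(\h\t(-))$ and finish identically with \ref{c_newintersection} and \ref{c_closedsubsets}.
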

\begin{proof}
The first inequality is \cref{l_levelvscat}. For the second inequality, set $N=K^M$, and consider the inequalities \begin{align*}
\level^k_{K^R}(N) & \ges \level^k_R(N) & \textrm{by \cref{c_levelsgodown}, restricting along } R\to K^R\\
& \ges \level^{\t k}_\Lambda (\t N) & \textrm{by applying } \t  \textrm{ and \cref{c_levelsgodown}}\\
&= \level^{k}_\Lambda (\t N) & \textrm{using  \cref{tk_trivial}} \\
&= \level^{\S}_\S (\h\t N) & \textrm{using \cref{c_bgg}}\\
& \ges \text{height}( \ann_\S \H(\h\t N))+1 & \textrm{by \cref{c_newintersection}}.
\end{align*}

Also, the following equalities are satisfied
\begin{align*}
\height( \ann_\S \H(\h\t N)) &=\codim \cV(\ann_\S \H(\h\t N)) & \textrm{by \cref{c_closedsubsets}}\\
&=\dim \S-\dim \V_R(M) & \\
&=\varepsilon_2(R)-\dim \V_R(M) & \textrm{by \cref{c_deviations}}
\end{align*}
where the second equality comes from \cref{c_hsupport} and \cref{p_koszulbggsupport} since $\H(\h\t N)$ is a finitely generated $\S$-module. 

Combining the two strings of inequalities completes the proof.
\end{proof}

The {\bf complete intersection defect} of $R$, introduced in \cite{Avramov:1977} and implicitly appearing in \cite{KiehlKunz}, is defined as
\[
\cid(R)\coloneqq\varepsilon_2(R)-\varepsilon_1(R)+\dim R\,,
\]
or alternatively, $\cid(R)=\rank_k (I/\m I) - \height(I)$ \cite[Lemma 7.4.1]{Avramov:2010}. This is a nonnegative integer that measures how close $R$ is to being complete intersection, in particular, $R$ is complete intersection if and only if $\cid(R)=0;$ see \cite[Section 1]{KiehlKunz} and \cite[(3.2.3)]{Avramov:1977}.
Showing this invariant is additive along the base, source and fiber of a flat map was fundamental in Avramov's solution to the localization problem for complete intersection rings \cite{Avramov:1975,Avramov:1977}.

\begin{corollary}
\label{cor_bound}
Every nonzero object $M$ in $\Df(R)$ satisfies
\[
\dim \V_R(M) \ges \varepsilon_2(R)-\cat(R)\,.
\]
In particular, if $R$ is Cohen-Macaulay but not complete intersection, then 
\[\dim \V_R(M)> \cid(R)>0\]
for each nonzero object $M$  in $\Df(R).$
\end{corollary}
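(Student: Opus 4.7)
The first inequality is essentially immediate from \cref{t_bound}. That theorem gives $\cat(R)+1 \ges \varepsilon_2(R)-\dim\V_R(M)+1$ for any nonzero $M$ in $\Df(R)$, and rearranging yields $\dim\V_R(M)\ges\varepsilon_2(R)-\cat(R)$. So the opening claim requires no new work.

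For the second part, assume $R$ is Cohen-Macaulay but not complete intersection. My plan is to show $\cat(R)\les\cid(R)-1$ under these hypotheses and then feed this into the first inequality. Since $R$ is Cohen-Macaulay, $\depth R=\dim R$, and since $\widehat R=Q/I$ with $Q$ regular local, $\depth Q=\dim Q=\varepsilon_1(R)$. Therefore
\[
\codepth(R)=\depth Q-\depth R=\varepsilon_1(R)-\dim R.
\]
By \cref{c_fiveauthorbound} we always have $\cat(R)\les\codepth(R)$, and the remark there records that this inequality is strict whenever $R$ is not complete intersection. Thus $\cat(R)\les\codepth(R)-1=\varepsilon_1(R)-\dim R-1$.

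Combining this with the first inequality of the corollary, for any nonzero $M$ in $\Df(R)$,
\[
\dim\V_R(M)\ges \varepsilon_2(R)-\cat(R)\ges \varepsilon_2(R)-\varepsilon_1(R)+\dim R+1=\cid(R)+1,
\]
which gives $\dim\V_R(M)>\cid(R)$. Finally $\cid(R)>0$ holds because $\cid(R)=0$ characterizes the complete intersection property (as recalled in the discussion before the corollary), and $R$ was assumed not complete intersection.

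There is no substantial obstacle here: the entire argument is a bookkeeping exercise that combines \cref{t_bound} with the standard inequality $\cat(R)\les\codepth(R)$ and its strict form in the non-complete-intersection case. The only point worth double-checking is that the strictness in \cref{codepth_inequality} really applies under the hypothesis ``Cohen-Macaulay and not complete intersection'', which it does since strictness there is proved under the single assumption that $R$ fails to be complete intersection.
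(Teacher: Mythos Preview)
Your proof is correct and follows essentially the same approach as the paper: both derive the first inequality directly from \cref{t_bound}, and both obtain the second by combining this with the strict inequality $\cat(R)<\codepth(R)$ for non-complete-intersection rings from \cref{c_fiveauthorbound}, together with the Cohen--Macaulay identity $\codepth(R)=\varepsilon_1(R)-\dim R$. You have simply spelled out the bookkeeping that the paper leaves implicit.
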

\begin{proof}
The first inequality is immediate from \cref{t_bound}. The second inequality now follows from the first and \cref{codepth_inequality} in \cref{c_fiveauthorbound}.
\end{proof}

There is no uniform bound on the difference between $\dim \V_R(R)$ and $\cid(R)$ when $R$ is not complete intersection. 
\begin{example}
\label{example dim vs cid}
Let $R=k[x_1,\ldots,x_i]/(x_1,\ldots,x_i)^2$ for $i\ges 2$. From the definition of complete intersection defect, 
\[
\cid(R) = \binom{i+1}{2} - i\,.
\]
On the other hand, $R$ is Golod, and therefore by \cref{golod} it follows that for any nonzero $M$ in $\Df(R)$:
\[
\dim \V_R(R) =\binom{i+1}{2} \quad \text{and} \quad \dim \V_R(M)\ges \binom{i+1}{2}-1\,,
\]
and in particular $
\dim \V_R(R) - \cid(R) = i$.

\end{example}

\begin{chunk}
The \textbf{realizability question} asks: \emph{Can every subset of $\spec \S$ be realized as $\V_R(M)$ for some $M$ in $\Df(R)$?} 
This has been studied in several contexts, see for example \cite{Avramov:1989,Bergh,Burke/Walker:2015,Carlson:1984,Erdmann:2004,Suslin/Friedlander/Bendel:1997}.
It was shown in \cite{Bergh} that the realizability problem has a positive answer when $R$ is complete intersection; see also \cite{Avramov/Iyengar:2007}. In \cite{Pollitz:2021}, it was shown that if $R$ is not complete intersection, then $\{\bm{0}\}$ is never realizable as $\V_R(M)$ when $M$ is a finitely generated $R$-module. However, it was was asked in \cite{Pollitz:2021} whether the solution to the realizability problem is positive regardless of whether $R$ is complete intersection -- in particular, whether $\{\bm{0}\}$ can be realized by some object in $\Df(R)$.
\end{chunk}

\cref{example dim vs cid} provides a class of rings for which the realizability problem has a negative answer in a rather drastic fashion. We also have the following for Cohen-Macaulay rings; it is an immediate consequence of \cref{cor_bound}.

\begin{corollary}
\label{cor_non_realizability}
If $R$ is a Cohen-Macaulay ring, there exists $M$ in $\Df(R)$ with $\V_R(M) = \{\bm{0}\}$ if and only if $R$ is complete intersection. 
\end{corollary}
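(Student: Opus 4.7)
The plan is to deduce the corollary directly from \cref{cor_bound} together with the characterization of complete intersection rings in \cref{ex_characterization}.

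For the ``if'' direction, suppose $R$ is a complete intersection. By \cref{ex_characterization}, $\V_R(R)=\{\bm{0}\}$, so we may simply take $M=R$, which lies in $\Df(R)$.

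For the ``only if'' direction, suppose there exists a nonzero $M$ in $\Df(R)$ with $\V_R(M)=\{\bm{0}\}$. Since $\{\bm{0}\}=\cV(\chi_1,\ldots,\chi_n)$ corresponds to the quotient $\S/(\chi_1,\ldots,\chi_n)=k$, which has Krull dimension zero, we have $\dim \V_R(M)=0$. Now assume for contradiction that $R$ is Cohen-Macaulay but not a complete intersection. Then \cref{cor_bound} gives
\[
0 = \dim \V_R(M) > \cid(R) > 0\,,
\]
which is absurd. Hence $R$ must be complete intersection.

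The entire argument is essentially a packaging of \cref{cor_bound}; the main content of the statement, namely the strict positivity $\dim \V_R(M) > \cid(R) > 0$ for Cohen-Macaulay non-complete intersections, was already established there. No further obstacle arises, as both implications reduce to invoking results already proved earlier in the section.
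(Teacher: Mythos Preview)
Your proof is correct and follows essentially the same approach as the paper, which simply states that the corollary is an immediate consequence of \cref{cor_bound}. You have unpacked this a bit more carefully by also citing \cref{ex_characterization} for the ``if'' direction, which is appropriate since \cref{cor_bound} by itself only addresses the non-complete intersection case.
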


\begin{remark}
\label{refined realizability}
In light of \cref{cor_bound}, a more sensible version of the realizability question is the following problem: \emph{What subsets of $\spec \S$ of codimension at most $\cat(R)$ are realizable as $\V_R(M)$ for some $M$ in $\Df(R)$?}
\end{remark}

\begin{remark}\label{rem_ci_thick_subcats}
   When $R$ is complete intersection the Koszul complex $K^R$ belongs to every thick subcategory of $\Df(R)$ by the work of Dwyer, Greenlees, and Iyengar \cite{Dwyer/Greenlees/Iyengar:2006b}, and so there is a unique minimal thick subcategory (see also~\cite[Theorem~5.2]{Pollitz:2019} for the converse). In contrast, when $\{\bm{0}\}$ is not realizable (as in \cref{cor_non_realizability}), the cohomological support varieties take values in $\Spec^* \S \smallsetminus \{\bm{0}\} = \mathrm{Proj} \S$, and we note next that this implies that the lattice of thick subcategories of $\Df(R)$ exhibits ``non-local'' behaviour.
\end{remark}

\begin{proposition}
\label{cor_thick_intersection}
If $R$ is a local ring such that $\{\bm{0}\}$  is not the cohomological support of any object in $\Df(R)$ then there are infinitely many nonzero thick subcategories of $\Df(R)$ having pairwise zero intersection. 
\end{proposition}

\begin{proof}
Given $X\subseteq \Spec^*\S$   we write $\mathsf{T}_X$ for the full subcategory of $\Df(R)$ containing those $M$ with $\V_R(M)\subseteq X$; by \cref{rem_thick_support} this is a thick subcategory of $\Df(R)$.

Pick any nonconstant homogeneous element $\zeta_1 \in \S$ and let $X_1$ be a minimal closed subset of $\mathcal{V}(\zeta_1)$ such that $\mathsf{T}_{X_1}\neq (0)$. Such an $X_1$ exists because $L_{\zeta_1}\in \mathsf{T}_{\mathcal{V}(\zeta_1)}$ by \cref{ex_koszul}, and $X_1\neq \{\bm{0}\}$ by hypothesis.

We continue to construct $X_1,X_2,\ldots$ inductively. Assume that $X_1,\ldots,X_{n}$ are all minimal with the property that $\mathsf{T}_{X_i}$ are nonzero, and that $\mathsf{T}_{X_i}\cap \mathsf{T}_{X_j}=(0)$ when $i\neq j$. It follows again that each $X_i\neq \{\bm{0}\}$, and therefore we may choose a nonconstant homogeneous $\zeta_{n+1}\in \S$ such that $X_i\not\subseteq\mathcal{V}(\zeta_{n+1})$ for all $i\leqslant n$, and we may choose a minimal $X_{n+1}\subseteq \mathcal{V}(\zeta_{n+1})$ for which $\mathsf{T}_{X_{n+1}}\neq(0)$. To conclude, we note that $\mathsf{T}_{X_{n+1}}\cap \mathsf{T}_{X_{i}}\subseteq\mathsf{T}_{X_{i}\cap X_{n+1}} =(0)$ for all $i\leqslant n$, since $X_{i}\cap X_{n+1}$ is strictly contained in $X_i$ by construction.
\end{proof}

   We expect that the Cohen-Macaulay hypothesis in \cref{cor_non_realizability} is unnecessary, and therefore \cref{cor_thick_intersection} should apply to all rings that are not complete intersection.    In the case of Golod rings we can be more precise; see \cref{cor Lzeta intersection golod}.

\section{Upper bounds on cohomological support}\label{s_HLA}

 In this section we prove \cref{th_radical_support}, connecting the cohomological support varieties with the homotopy Lie algebra.  
 This result, paired with \cref{t_bound}, has several applications that we provide after the proof of \cref{th_radical_support}. 
 
 As in \cref{notation_s_support} and throughout, $R$ is a local ring with a Cohen presentation $\widehat{R}=Q/I$, where $I$ is minimally generated by $\f=f_1,\ldots,f_n$, with the corresponding Koszul complex $E$ and ring of cohomology operators $\S$.

 We also use background material from \cref{s_support,s_HLA_background}. In particular, in the notation of \ref{c_identification}, there is a canonical isomorphism  between the homotopy Lie algebra in degree $2$ and the space of cohomology operators:
 \[
    \pi^2(R) = \mathrm{span}_k\{e_1^\vee,\ldots,e_n^\vee\} \xrightarrow{\ \cong\ } \S^2 = \mathrm{span}_k\{\chi_1,\ldots,\chi_n\} \,,\ \quad e_i^\vee\mapsto \chi_i\,,
\]
where, recall, $(-)^\vee$ denotes $k$-space duality. 
In what follows, for an element \[\alpha= a_1 e_1^\vee + \cdots + a_n e_n^\vee\in \pi^2(R)\,,\] we will write $\chi_\alpha \coloneqq a_1 \chi_1+ \cdots + a_n \chi_n$ for the corresponding element of $\S^2$.
 
 In the next theorem, for a conical subset $\mathcal{U}$ of $\spec \S$, we write $\mathrm{span}_k \mathcal{U}$  for the smallest closed subset containing  $\mathcal{U}$ defined by linear forms in $\S$. When $k$ is algebraically closed, upon regarding $\mathcal{U}$ as a cone in the affine $n$-space $\mathbb{A}_k^n$, then $\mathrm{span}_k\mathcal{U}$ is the $k$-subspace spanned by $\mathcal{U}$.

\begin{theorem}\label{th_radical_support}
Under the isomorphism $\pi^2(R) \xra{\ \cong\ } \S^{2}$, radical elements of $\pi^2(R)$ are sent to linear polynomials vanishing on the subvariety $\V_R(R)\subseteq \spec \S$. In other words, the isomorphism  restricts to an embedding
\[
\rad^2(\pi^*(R)) \hookrightarrow \left\{ \chi\in\S^{2} \mid \V_R(R)\subseteq \mathcal{V}(\chi) \right\}\,.
\]
In particular
\[
\rank_k \rad^2(\pi^*(R)) \leqslant \varepsilon_2(R) - \rank_k (\mathrm{span}_k \V_R(R))\,.
\]
\end{theorem}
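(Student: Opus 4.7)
The plan is to establish the embedding by showing, for every radical element $\alpha \in \rad^2(\pi^*(R))$, the inclusion $\V_R(R) \subseteq \cV(\chi_\alpha)$. Since $\V_R(R) = \cV(\ann_\S \Ext_E(k,\widehat{R}))$ by \cref{c_hsupport}, this reduces to showing that some power $\chi_\alpha^N$ annihilates $\Ext_E(k,\widehat{R})$. The stated rank inequality is then a routine linear algebra consequence: $\mathrm{span}_k \V_R(R)$ is cut out by the linear forms vanishing on $\V_R(R)$, so its codimension in $\spec \S$ equals the dimension of the space of such linear forms. To make the identification $\pi^2(R) \cong \S^{2}$ precise, I would check that under the natural embedding $\pi^*(R) \hookrightarrow \Ext_R(k,k)$ coming from the minimal model, composed with the surjection $\Ext_R(k,k) \twoheadrightarrow \Ext_E(k,k)$ of \cref{ex_koszul}, the element $e_i^\vee \in \pi^2(R)$ from \cref{c_identification} is carried to the cohomology operator $\chi_i \in \S^{2}$. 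This compatibility lets me replace the abstract Lie-algebraic condition on $\alpha$ with a statement about $\chi_\alpha$ acting on $\Ext_E$-modules.

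Next I would exploit the nullhomotopy characterization of LS-category flagged in \cref{c_LScategory}. Fix a minimal model $Q[X] \to \widehat{R}$ and set $A = k[X]$; the inclusion $\mathfrak{m}_A^{\cat(A)+1} \hookrightarrow \mathfrak{m}_A$ is nullhomotopic as a chain map. The radical condition $\ad(\alpha)^p = 0$, interpreted on the minimal model, says that iterating the derivation associated to $\alpha$ drives its image into $\mathfrak{m}_A^{\cat(A)+1}$ after sufficiently many steps. Splicing that iteration with the LS-category nullhomotopy produces an explicit chain homotopy witnessing that a power $\chi_\alpha^N$ acts as zero on a semifree resolution computing $\Ext_E(k,\widehat{R})$, which is exactly the nilpotency we need.

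The main obstacle is this second step: turning the Lie-algebraic condition $\ad(\alpha)^p = 0$ in $\pi^*(R)$ into the chain-level statement that a certain derivation on $A$ lands in $\mathfrak{m}_A^{\cat(A)+1}$. This requires combining the explicit bracket formula from \cref{s_HLA_background} with the Hopf algebra identification $U(\pi^*(R)) \cong \Ext_R(k,k)$, together with a careful filtration argument by powers of $\mathfrak{m}_A$. Once the nullhomotopy is produced, the containment $\V_R(R) \subseteq \cV(\chi_\alpha)$ is immediate, and the bound on $\rank_k \rad^2(\pi^*(R))$ drops out formally from the codimension count in the first paragraph.
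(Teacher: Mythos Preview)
Your overall architecture matches the paper's: reduce to showing $\chi_\alpha$ acts nilpotently on $\Ext_E(\widehat{R},k)$, lift $\chi_\alpha$ to a chain-level derivation on the minimal model, use $\ad(\alpha)^p=0$ to push iterates of this derivation into high powers of the augmentation ideal, then invoke the LS-category nullhomotopy. The final rank inequality is indeed formal.

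The gap is in the step you yourself flag as the main obstacle. You speak of ``the derivation associated to $\alpha$'' without saying what it is, why it induces $\chi_\alpha$ on cohomology, or why its induced map on indecomposables recovers $\ad(\alpha)$. The paper resolves this concretely and differently from your suggestion: it works not on $A=k[X]$ but on $k[X_{\geqslant 2}]=Q[X]\otimes_E k$, which computes $\Ext_E(\widehat{R},k)^\vee$ directly. There it constructs the derivation $\theta_\alpha$ as the reduction of the commutator $[\partial,\frac{d}{de_\alpha}]$, and identifies $\H(\theta_\alpha)$ with $\chi_\alpha$ by a comparison through divided-power variables and the derivations $\frac{d}{dy_\alpha}$ (using \cite{Avramov/Buchweitz:2000a}). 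The link $\overline{\theta}_\alpha^\vee = -\ad(\alpha)$ on $(X_{\geqslant 2})/(X_{\geqslant 2})^2$ is then quoted from \cite[Proposition~2.4]{Briggs:2022}.

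Your proposed route through the Hopf algebra identification $U(\pi^*(R))\cong \Ext_R(k,k)$ is not what the paper does and is not obviously adequate: that identification lives over $R$, while the $\S$-action you need is on $\Ext_E(\widehat{R},k)$, computed over $E$. Translating nilpotency of $\ad(\alpha)$ in $\pi^*(R)$ into nilpotency of the $\S$-action via this indirect path would require exactly the kind of explicit chain-level comparison the paper carries out. So your sketch has the right skeleton, but the hardest technical organ---the construction of $\theta_\alpha$ and the two identifications $\H(\theta_\alpha)=\chi_\alpha$ and $\overline{\theta}_\alpha^\vee=-\ad(\alpha)$---is missing, and your proposed substitute does not obviously supply it.
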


\begin{proof}
Consider a radical element $\alpha\in \pi^2(R)$. By definition, we have an inclusion $\V_R(R) \subseteq \mathcal{V}(\chi_\alpha)$ exactly when $\chi_\alpha$ acts nilpotently on  $\Ext_E(\widehat{R},k)$; cf.\@ also  \cref{c_support_symmetry}.

We use the theory of semifree dg algebra resolutions from \ref{c_semifree}, taking $Q[X]$ to be a minimal model for $\widehat{R}$, with  $E=Q[X_1]$. We recall that $\Ext_E(\widehat{R},k)\cong \Tor^E(\widehat{R},k)^\vee$, see \cite[4.3]{Pollitz:2021}, and in what follows we will describe the action of $\chi_\alpha$ in terms of this duality. As $Q[X]$ is semifree as a dg $E$-module we have
	\[
	\Ext_E(\widehat{R},k) \cong \H\!\big(\Hom_E(Q[X],k)\big) \cong \H\!\big(\Hom_k(Q[X]\otimes_Ek,k)\big) \cong (\H(k[X_{\geqslant 2}]))^\vee\,,
	\]
	where  $k[X_{\geqslant 2}]\coloneqq Q[X]\otimes_Ek$. It is therefore equivalent to show that $\chi_\alpha^q\H(k[X_{\geqslant 2}])=0$ for some $q$.
	
	At this point we need to recall how the cohomology operator $\chi_\alpha$ may be computed using divided powers; cf.\@  \cite[Section~6]{Avramov:2010} and \cite[Chapter~1]{Gulliksen/Levin:1969} for background on the latter. Starting from $k[X]=Q[X]\otimes_Qk$  construct the extension $k[X]\langle {y_1, \ldots, y_n} \rangle$, where the $y_i$ are free divided power variables with $\partial(y_i) = e_i$. Using \cite[Proposition 6.1.7]{Avramov:2010} factoring by the  $e_i$ and $y_i$ yields a quasi-isomorphism
	\[
	k[X]\langle {y_1, \ldots, y_n} \rangle \xrightarrow{\ \simeq\ } k[X_{\geqslant 2}]\,.
	\]
For each $i$ there is a unique derivation $\frac{d}{dy_i}$ on $k[X]\langle {y_1, \ldots, y_n} \rangle$, respecting the divided power structure, such that $\frac{d}{dy_i}(y_i)=1$, while $\frac{d}{dy_i}(y_j)=0$ for $i\neq j$ and $\frac{d}{dy_i}(X)=0$; see \cite[Section 6.2]{Avramov:2010}. Writing 
\[\frac{d}{dy_\alpha} = a_1 \frac{d}{dy_1}+\cdots +a_n \frac{d}{dy_n}\]
there is by \cite[Proposition~2.6]{Avramov/Buchweitz:2000a} a commutative diagram
	\[
\begin{tikzcd}
	    \H_*\!\big(k[X]\langle {y_1, \ldots, y_n} \rangle \big) \ar[d,"\H_*\left(\frac{d}{dy_\alpha}\right)"'] \ar[r,"{\cong}"] &  \H_*\!\big(k[X_{\geqslant 2}]\big) \ar[d,"{\chi_\alpha}"]\\
\H_{*-2}\!\big(k[X]\langle {y_1, \ldots, y_n} \rangle \big) \ar[r,"{\cong}"] & \H_{*-2}\!\big(k[X_{\geqslant 2}]\big)\,.
	\end{tikzcd}
	\]
In other words, $\frac{d}{dy_\alpha}$ induces the cohomology operator $\chi_\alpha$ on $\Tor^E(\widehat{R},k)$.	

We can go further and identify $\chi_\alpha$ as a chain map on $k[X_{\geqslant 2}]$. To do this we use a construction from \cite[Construction 2.3]{Briggs:2022}. This time on $k[X]$ we consider for each $i$ the derivation $\frac{d}{de_i}$ for which $\frac{d}{de_i}(e_i)=1$, $\frac{d}{de_i}(e_j)=0$ when $i\neq j$, and $\frac{d}{de_i}(x)=0$ if $x\in X_{\geqslant 2}$, and we take the linear combination \[\frac{d}{de_\alpha}=a_1 \frac{d}{de_1}+\cdots +a_n \frac{d}{de_n}\,.\]
The derivation $\frac{d}{de_\alpha}$ may not be a chain map, and we take its boundary $[\partial,\frac{d}{de_\alpha}]=\partial\frac{d}{de_\alpha}+\frac{d}{de_\alpha}\partial$. For degree reasons $[\partial,\frac{d}{de_\alpha}](X_1)=0$, and it follows that $[\partial,\frac{d}{de_\alpha}]$ induces a derivation on $k[X]/(X_1)=k[X_{\geqslant 2}]$, that we denote $\theta_\alpha\colon k[X_{\geqslant 2}] \to k[X_{\geqslant 2}]$.

We can extend $\frac{d}{de_\alpha}$ to a derivation $\frac{d}{de_\alpha}'$ on $k[X]\langle {y_1, \ldots, y_n} \rangle$ by setting 
$$\frac{d}{de_\alpha}'(y^{(p)}_i) = 0$$ 
for each $i$ and $p$. With all of this notation in place, we consider the diagram
\[
\begin{tikzcd}
k[X]\langle {y_1, \ldots, y_n} \rangle \ar[d,"{[\partial,\frac{d}{de_\alpha}'] + \frac{d}{dy_\alpha}}"'] \ar[r,"{\simeq}"] & k[X_{\geqslant 2}] \ar[d,"{\theta_\alpha}"]\\
k[X]\langle {y_1, \ldots, y_n}\rangle \ar[r,"{\simeq}"] & k[X_{\geqslant 2}]\,.
\end{tikzcd}
\]

A direct computation, by checking on the generators, shows that the diagram is commutative. It follows that on $\H\!\big(k[X_{\geqslant 2}]\big)$ we have
\[
\chi_\alpha = \H\!\left({\textstyle \frac{d}{dy_\alpha}}\right) = \H\!\left({\textstyle [\partial,\frac{d}{de_\alpha}'] + \frac{d}{dy_\alpha}}\right) = \H(\theta_a)\,.
\]
Returning to the homotopy Lie algebra, the definition of $\pi^*(R)$ implies that, for $i\geqslant 3$, there is a canonical isomorphism
\[
\pi^i(R)\cong \big((X_{\geqslant 2})/(X_{\geqslant 2})^2\big)_{i-1}^\vee\,.
\]
At the same time, the derivation $\theta_\alpha$ induces a map
\[
\overline{\theta}_\alpha \colon \big((X_{\geqslant 2})/(X_{\geqslant 2})^2\big)_{i+1}\longrightarrow \big((X_{\geqslant 2})/(X_{\geqslant 2})^2\big)_{i-1}\,.
\]
And the statement of \cite[Proposition 2.4]{Briggs:2022} is that the following diagram is commutative
\[
\begin{tikzcd}
\pi^{i+2}(R) \ar[d,"\cong"] & \pi^i(R) \ar[d,"\cong"]  \ar[l,"-\ad(\alpha)"']\\
\big((X_{\geqslant 2})/(X_{\geqslant 2})^2\big)_{i+1}^\vee & \big((X_{\geqslant 2})/(X_{\geqslant 2})^2\big)_{i-1}^\vee  \ar[l,"\overline{\theta}_\alpha^\vee"']\,;
\end{tikzcd}
\]
compare also the proof of \cite[Proposition 4.2]{Avramov/Halperin:1987}. Therefore, if $\ad(\alpha)^p=0$ then as well $\overline{\theta}_\alpha^p=0$, or in other words ${\theta}_\alpha^p(X_{\geqslant 2}) \subseteq (X_{\geqslant 2})^2$. Iterating this, there is for any $m$ some number $q$ such that ${\theta}_\alpha^q(X_{\geqslant 2}) \subseteq (X_{\geqslant 2})^m$.

Now by \ref{c_LScategory} and \ref{c_fiveauthorbound} there is an $m$ such that the inclusion $(X_{\geqslant 2})^m \hookrightarrow (X_{\geqslant 2})$ is a nullhomotopic chain map. Taking the corresponding $q$, the chain map \[\theta_\alpha^q \colon  k[X_{\geqslant 2}] \to  k[X_{\geqslant 2}]\] 
factors through this inclusion, and is therefore also nullhomotopic. In particular, $\chi_\alpha^q=\H(\theta_\alpha^q)=0$ on $\H\!\big(k[X_{\geqslant 2}]\big)$, and this concludes the proof.
\end{proof}

We now  provide several applications of \cref{t_bound,th_radical_support}. The first is \cref{intro upper bound on dim rad} from the introduction. 

\begin{theorem}\label{upper bound on dim rad}
Let $M$ be a nonzero perfect complex over $R$. Then
\[\sum \ell\ell_R \left( \H_n(M) \right) \ges \level^k_R(M)\ges \codim \V_R(R)+1 \geqslant \rank_k\rad^2(\pi^*(R))+1.\]
\end{theorem}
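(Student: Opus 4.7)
The strategy is to prove each of the three inequalities in turn; the middle one carries the main content.

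For the leftmost bound $\sum_n \ll_R(\H_n(M)) \ges \level_R^k(M)$, I would use two subadditivity facts. First, for any finitely generated $R$-module $N$ the $\m$-adic filtration $N\supset \m N\supset \m^2 N\supset \cdots \supset \m^{\ll_R(N)}N=0$ has successive quotients in $\thick_R^1(k)$, so stringing together the associated triangles gives $\level_R^k(N)\les \ll_R(N)$. Second, the standard truncation triangles $\tau_{\ges n+1}M\to \tau_{\ges n}M\to \shift^n \H_n(M)\to$ express $M$ as an iterated extension of shifts of its homology modules, so subadditivity of level across these triangles yields $\level_R^k(M)\les \sum_n \level_R^k(\H_n(M))\les \sum_n \ll_R(\H_n(M))$.

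For the middle bound $\level_R^k(M)\ges \codim \V_R(R)+1$, the idea is to transport $\level_R^k$ through the Koszul construction and then reuse the chain of inequalities already appearing inside the proof of \cref{t_bound}. Consider the exact endofunctor $t\coloneqq -\lotimes_R K^R\colon \D(R)\to \D(R)$. Since $K^R$ is a bounded complex of finitely generated free $R$-modules, $t(k)\simeq K^R\otimes_R k$ is a finite direct sum of shifts of $k$ and therefore lies in $\thick_R^1(k)$. Consequently $\thick_R^n(t(k))\subseteq \thick_R^n(k)$ for every $n$, which together with \cref{c_levelsgodown} applied to $t$ gives $\level_R^k(K^M)=\level_R^k(tM)\les \level_R^k(M)$. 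On the other hand, the same chain appearing inside the proof of \cref{t_bound} — restricting along the completion so as to view $K^M$ in $\Df(E)$, applying the BGG-type functors $\t$ and $\h$ (using \cref{tk_trivial} and \cref{c_bgg}), and invoking the dg new intersection theorem \cref{c_newintersection} — already yields $\level_R^k(K^M)\ges \varepsilon_2(R)-\dim\V_R(M)+1$. Finally, because $M$ is a nonzero perfect complex we have $M\in \thick_R R$, so by \cref{rem_thick_support} $\V_R(M)\subseteq \V_R(R)$ and therefore $\dim\V_R(M)\les \dim\V_R(R)$, giving $\varepsilon_2(R)-\dim\V_R(M)\ges \codim\V_R(R)$.

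The rightmost bound $\codim\V_R(R)\ges \rank_k\rad^2(\pi^*(R))$ is a direct corollary of \cref{th_radical_support}: that result gives $\rank_k\rad^2(\pi^*(R))\les \varepsilon_2(R)-\rank_k(\mathrm{span}_k\V_R(R))$, and since $\V_R(R)\subseteq \mathrm{span}_k\V_R(R)$ we have $\dim\V_R(R)\les \rank_k(\mathrm{span}_k\V_R(R))$, so the right-hand side is at most $\varepsilon_2(R)-\dim\V_R(R)=\codim\V_R(R)$.

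The main obstacle is the middle inequality — concretely, checking that the Koszul endofunctor $t=-\lotimes_R K^R$ does not increase level with respect to $k$, and verifying that the chain of inequalities inside the proof of \cref{t_bound} can be restarted from $\level_R^k(K^M)$ rather than $\level_{K^R}^k(K^M)$. Once this bridge is in hand, the other two inequalities are essentially formal consequences of existing results.
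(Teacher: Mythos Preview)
Your proof is correct and follows essentially the same route as the paper. The only packaging difference is that the paper lands in $\D(K^R)$ via $-\otimes_R K^R$ and then invokes \cref{t_bound} as a black box (which already contains the step $\level_{K^R}^k(K^M)\ges \level_R^k(K^M)$), whereas you stay in $\D(R)$ and restart the chain from $\level_R^k(K^M)$; both hinge on the same observation that $k\otimes_R K^R$ lies in $\thick^1(k)$, and both finish with \cref{th_radical_support}.
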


\begin{proof}
The first inequality is standard; see \cite[Theorem~6.2]{Avramov/Buchweitz/Iyengar/Miller:2010}. For the remaining inequalities, applying 
$- \otimes_R K^R$ yields the inequality
\begin{equation}
\label{eq1}
    \level^k_R(M) \ges \level_{K^R}^k (K^M)\,;
\end{equation}
cf.\@ \cref{c_levelsgodown}. 
Also, from \cref{t_bound} we have the first inequality
\begin{equation}
\label{eq2}
    \level^k_{K^R}(K^M) \ges \varepsilon_2(R)-\dim \V_R(M)+1=\varepsilon_2(R)-\dim\V_R(R)+1\,;
\end{equation}
for the equality note that $M$ is a nonzero perfect complex and so from \cite[3.3.2]{Pollitz:2019} we have that $\V_R(M)=\V_R(R)$.
Now applying \cref{th_radical_support} and combining the inequalities from \eqref{eq1} and  \eqref{eq2} we obtain the desired result. \end{proof}

\begin{remark}\label{r_cf_rank}
\cref{upper bound on dim rad} recovers and improves the bound from the main theorem in \cite{Avramov/Buchweitz/Iyengar/Miller:2010} which replaces the right hand-side in the inequality above with the conormal free rank of $R$. The {\bf conormal free rank} of $R$,
   denoted $\text{cf-rank}(R)$, is the largest free rank of a conormal module $I/I^2$ corresponding to a minimal Cohen presentation $Q \twoheadrightarrow \widehat{R} \cong Q/I$. To see that \cref{upper bound on dim rad} recovers \cite[Theorem~3]{Avramov/Buchweitz/Iyengar/Miller:2010}, we only need to recall the fact that $\rank_k \rad^2(\pi^*(R))\geqslant \text{cf-rank}(R)$ from  \cite{Iyengar:2001}. Furthermore, whenever $\V_R(R)$ is not a linear space, the bound in \cref{upper bound on dim rad} is strictly stronger than the one in \cite[Theorem~3]{Avramov/Buchweitz/Iyengar/Miller:2010} 
   as 
   \[
   \codim \V_R(R)>\codim \mathrm{span}_k \V_R(R)\ges \text{cf-rank}(R)\,.
   \]
  Finally, as noted in \cite{Avramov/Buchweitz/Iyengar/Miller:2010}, this also recovers a theorem from homotopy theory \cite{Carlsson:1983,Allday/Puppe:1993}: \emph{For a prime $p>0$, if $E$ is an elementary abelian $p$-group of rank $r$ that acts freely and cellulary on a finite CW-complex $X$, then}
   \[
   \sum\ell\ell_{\mathbb{F}_pE} \left( \H_n(X;\mathbb{F}_p) \right) \ges r+1\,.
   \]
\end{remark}

Together with the characterization of complete intersection rings in terms of $\V_R(R)$ \cite{Pollitz:2019},
Theorem \ref{th_radical_support} gives a conceptual generalization of the following result of Avramov and Halperin~\cite[Theorem~C]{Avramov/Halperin:1987}, which originally was used to prove that a  conjecture of Quillen on the cotangent complex holds in characteristic zero \cite{Quillen:1970}. The same result is also an ingredient in the recent proof of Vasconcelos' Conjecture on the conormal module~\cite{Briggs:2022}, as well as a recent new proof of Quillen's Conjecture in all characteristics~\cite{Briggs/Iyengar}, that had originally been settled by Avramov~\cite{Avramov:1999}.

\begin{corollary} \label{cor_AH_rad}
The ring $R$ is complete intersection if and only if every element of $\pi^2(R)$ is radical.
\end{corollary}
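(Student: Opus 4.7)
The plan is to deduce both implications from \cref{th_radical_support} together with Pollitz's characterization recorded in \cref{ex_characterization}: that $R$ is complete intersection exactly when $\V_R(R) = \{\bm{0}\}$. Since the main geometric content is already captured in \cref{th_radical_support}, I expect no serious obstacle; the corollary amounts to a dictionary translation between radicality in $\pi^2(R)$ and the dimension of $\V_R(R)$.

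For the forward direction, I would assume every element of $\pi^2(R)$ is radical, so that $\rad^2(\pi^*(R)) = \pi^2(R)$ has $k$-rank $\varepsilon_2(R) = n$. The embedding from \cref{th_radical_support} then sends a basis of $\pi^2(R)$ to a basis of $\S^{2}$ consisting of linear forms vanishing on $\V_R(R)$. In particular $\chi_1,\ldots,\chi_n$ vanish on $\V_R(R)$, forcing $\V_R(R) \subseteq \mathcal{V}(\chi_1,\ldots,\chi_n) = \{\bm{0}\}$. Since $R \not\simeq 0$, the support is nonempty by \cref{c_support_symmetry}, hence $\V_R(R) = \{\bm{0}\}$, and \cref{ex_characterization} concludes that $R$ is complete intersection.

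For the converse, I would invoke the classical theorem of Gulliksen characterizing complete intersections by vanishing of higher deviations: $R$ is complete intersection if and only if $\varepsilon_i(R) = 0$ for every $i \ges 3$. Equivalently $\pi^i(R) = 0$ for $i \ges 3$, so for any $\alpha \in \pi^2(R)$ and any $\beta \in \pi^j(R)$ with $j \ges 1$, the bracket $[\alpha,\beta]$ lies in $\pi^{j+2}(R) = 0$. Hence $\ad(\alpha) = 0$ on the entire homotopy Lie algebra, so $\alpha$ is central and, a fortiori, radical. The only external input here is the classical deviation characterization of complete intersection rings, which is standard, so no new difficulty arises.
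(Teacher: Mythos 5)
Your proof is correct and follows the same route as the paper: the forward direction is exactly the paper's argument that $\varepsilon_i(R)=0$ for $i\ges 3$ forces every element of $\pi^2(R)$ to be central (hence radical), and the converse is the paper's application of \cref{th_radical_support} to get $\V_R(R)=\{\bm{0}\}$ followed by Pollitz's characterization. You merely spell out a few routine details (the intersection $\mathcal{V}(\chi_1,\ldots,\chi_n)=\{\bm{0}\}$ and nonemptiness of the support) that the paper leaves implicit.
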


\begin{proof}
If $R$ is complete intersection then $\pi^i(R)=0$ for $i\geqslant 3$, so every element of $\pi^2(R)$ is central, and in particular radical. If $\rad^2(\pi^*(R))=\pi^2(R)$ then by Theorem \ref{th_radical_support} we have $\V_R(R)=\{{\bm 0}\}$. By \cite[Theorem~3.3.2]{Pollitz:2019} this implies that $R$ is complete intersection.
\end{proof}

\begin{remark}
For accessibility, we have treated here the \emph{absolute} case, concerning the homotopy Lie algebra and cohomological supports defined over a local ring. The discussion generalizes almost verbatim to the \emph{relative} case, beginning with a surjective local map $\varphi\colon Q\to R$ (with $Q$ not necessarily regular). As long as $\varphi$ has finite projective dimension, the same proof shows that Theorem \ref{th_radical_support} holds for the relative homotopy Lie algebra $\pi^*(\varphi)$ (cf.~\cite{Briggs:2018}) and the relative support variety $\V_\varphi(R)$ (cf.~\cite{Avramov/Iyengar:2007,Pollitz:2021}). We note that Avramov and Halperin's result Corollary \ref{cor_AH_rad} was stated in this greater generality in \cite{Avramov/Halperin:1987}, and since \cite[Theorem~3.3.2]{Pollitz:2019}  applies in this more general setting we also recover the general statement in \cite{Avramov/Halperin:1987}. 
\end{remark}

\begin{remark}\label{remark embedded deformation}
 Given an embedded deformation $R=S/(f)$ (see \Cref{ex_codepth} for a definition), the element $f$ gives rise to a free summand of the conormal module of $R$, and also to a central element in $\pi^2(R)$ and a hyperplane containing $\V_R(R)$; cf.\@ \cite{Iyengar:2001} and \cite[Proposition~5.3.7]{Pollitz:2021}, respectively. In \cite{Briggs/Grifo/Pollitz:2022}, the ring $R$ is said to have {\bf spanning support} if $\V_R(R)$ generates the whole affine space, as a vector space. It follows that if $R$ has spanning support then $\mathrm{cf\text{-}rank}(R)=0$ and $R$ does not have an embedded deformation. Determining whether a given ring has an embedded deformation is in general a difficult question. 

Avramov \cite[Problem 4.3]{Avramov:1989a} asked if every central element in $\pi^2(R)$ arises from an embedded deformation of $R$. This has been answered affirmatively in some specific cases in \cite{Avramov:1989a} and \cite{Lofwall:1994}.  Iyengar had previously shown that  free summands of $I/I^2$ give rise to central elements of $\pi^2(R)$ \cite{Iyengar:2001}, and Johnson has recently shown that if $I$ is a licci ideal, then $R$ has an embedded deformation if and only if $I/I^2$ has a free summand \cite{Johnson:2022}. However, Dupont has shown that there is a  (non-standard) graded ring $R$ with a central element in $\pi^2(R)$ that does \emph{not} admit any embedded deformation compatible with the grading \cite{Dupont}; the ungraded analogue remains open.
\end{remark}

\begin{remark} 
\Cref{th_radical_support} says that each radical element in $\pi^2(R)$ defines a hyperplane containing $\V_R(R)$, and therefore to solve \cite[Problem 4.3]{Avramov:1989a} it is sufficient to show that every hyperplane containing $\V_R(R)$ gives rise to an embedded deformation of $R$; whether this holds was asked by the third author in \cite[Question~6.3.8]{Pollitz:2021}. Answering Avramov's problem using this strategy would also show that elements of $\rad^2(\pi^*(R))$ are in fact central. 

Moreover, from a computational and experimental aspect,  $\V_R(R)$ is easier to compute than $\rad^2(\pi^*(R))$. Indeed, when $R$ is not complete intersection $\pi^*(R)$ is an infinite dimensional graded $k$-space having exponential growth, and testing whether an element is radical is not realistic in examples. On the other hand, to calculate  $\V_R(R)$  one needs only a system of higher homotopies on a minimal resolution of $\widehat{R}$ over $Q$ \cite[Section~7]{Eisenbud:1980}, and this amounts to computing a finite number of matrices.
\end{remark}

\begin{remark}
 We expect the embedding in \Cref{th_radical_support} to be an isomorphism. That is, when $k$ is algebraically closed, we expect an equality
\begin{equation}\label{eq_rad_hyp}
\rank_k (\rad^2(\pi^*(R)) ) = \varepsilon_2(R) - \rank_k (\mathrm{span}_k \V_R(R))\,.
\end{equation}
Such a result would make the connection between Avramov's question \cite[Problem 4.3]{Avramov:1989a} and the third author's question \cite[Question~6.3.8]{Pollitz:2021} even stronger: Together, (\ref{eq_rad_hyp}) and \cite[Question~6.3.8]{Pollitz:2021} amount to the statement that $R$ has an embedded deformation if and only if  $\rad^2(\pi^*(R))$ is nonzero. This would strengthen \cite[Problem 4.3]{Avramov:1989a} from central elements to radical elements.
\end{remark}

\begin{remark}
Notice that in the proof of \Cref{upper bound on dim rad} the following inequality was established:
\[\level_{K^R}^k (K^M) \geqslant \rank_k (\rad^2(\pi^*(R)))\,.\]
Combining this with \Cref{l_levelvscat} yields
\[\cat(R) \geqslant \rank_k\rad^2(\pi^*(R))\,.\]
\end{remark}

Finally, we note that the methods above allow us to recover, by a new argument, one of the results of the famous \emph{Five Author Paper} \cite[Theorem A]{Felix/Halperin/Jacobsson/Lofwall/Thomas:1988}:

\begin{corollary}
For any local ring $R$,
\[
\codepth(R)\geqslant \rank_k (\rad^2(\pi^*(R)))\,.
\]
\end{corollary}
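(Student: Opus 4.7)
The plan is to chain together two inequalities already available in the excerpt, so the proof should be very short. On one side, the Observation immediately preceding the corollary records the inequality
\[
\cat(R) \geqslant \rank_k \rad^2(\pi^*(R))\,,
\]
which is obtained by combining the level bound \cref{l_levelvscat} with the inequality $\level_{K^R}^k(K^M) \geqslant \rank_k(\rad^2(\pi^*(R)))$ extracted from the proof of \cref{upper bound on dim rad}, applied to $M = R$ (a nonzero perfect complex over itself).

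On the other side, \cref{c_fiveauthorbound} records the bound \cref{codepth_inequality}, namely
\[
\cat(R) \leqslant \codepth(R)\,,
\]
which is a general fact about the LS category of a local ring (due to Bøgvad and Halperin).

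The proof proposal is then simply to concatenate these two: $\codepth(R) \geqslant \cat(R) \geqslant \rank_k(\rad^2(\pi^*(R)))$. There is no real obstacle here, since the substantive work has been done in establishing \cref{th_radical_support} (which feeds \cref{upper bound on dim rad}), in \cref{l_levelvscat}, and in the classical bound $\cat(R) \leqslant \codepth(R)$ cited from the literature. The only thing to verify is that the argument in the Observation really applies to $R$ itself, which is immediate because $R$ is a nonzero perfect $R$-complex and $\V_R(R)$ appears on the right-hand side of \cref{upper bound on dim rad} after using $\V_R(M) = \V_R(R)$ for perfect $M$.
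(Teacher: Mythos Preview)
Your argument is correct, but it takes a different route from the paper's own proof. You chain the preceding Observation ($\cat(R)\geqslant \rank_k\rad^2(\pi^*(R))$) with the B{\o}gvad--Halperin bound $\cat(R)\leqslant \codepth(R)$ recorded in \cref{c_fiveauthorbound}. The paper instead applies \cref{upper bound on dim rad} directly to the Koszul complex $M=K^R$ and invokes the rigidity of the Koszul complex from \cite[Example~6.7]{Avramov/Buchweitz/Iyengar/Miller:2010} to identify $\level^k_R(K^R)$ with $\codepth(R)$, which immediately yields the inequality from the chain in \cref{upper bound on dim rad}.

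The difference matters for the paper's framing: the authors advertise this as a \emph{new} proof of the Five Author result, and the paper's route achieves that by avoiding the LS-category machinery entirely at this step---the only external input is the level computation for $K^R$. Your route, while perfectly valid and pleasingly short given what has just been stated, leans on the cited B{\o}gvad--Halperin inequality $\cat(R)\leqslant\codepth(R)$, which sits in the same circle of ideas as the original FHJLT argument; so it is somewhat less independent of the classical approach. On the other hand, your argument makes efficient use of the Observation and requires no further external citations beyond what is already in \cref{c_fiveauthorbound}.
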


\begin{proof}
Apply \Cref{upper bound on dim rad} when $M$ is the Koszul complex on a minimal generating set for the maximal ideal of $R$; the inequality follows from the fact that in that case $\level^k_R(M) = \codepth(R)$. This is simply a consequence of the rigidity of the Koszul complex \cite[Example~6.7]{Avramov/Buchweitz/Iyengar/Miller:2010}.
\end{proof}

\section{The realizability problem for Golod rings}\label{s_golod}
For a local ring $R$ with residue field $k$, there is the following coefficient-wise inequality on the Poincaré series $\mathrm{P}_k^R(t)$ of $k$  due to Serre: 
\[
\mathrm{P}_k^R(t)\preccurlyeq\frac{(1+t)^{\varepsilon_1(R)}}{1-t\sum_{i=1}^{\codepth R}\rank_k\H_i(K^R) t^i}\,;
\]
see for example \cite[Proposition~4.1.4]{Avramov:2010}. We say $R$ is \textbf{Golod} if equality holds. By definition these rings possess extremal homological behavior; see \cite{Golod} or \cite[Section~5]{Avramov:2010}. 
There are two distinct cases: first, when the codepth of $R$ is at most one (and hence $R$ is a hypersurface ring), then $R$ is always Golod and has easy to  understand asymptotic properties; second, when $R$ is Golod with codepth at least two, resolutions of modules typically have  exponential growth. 

In this section we show that the extremal behavior of Golod rings with 
codepth at least two is also reflected in the limitations on the cohomological supports that are realizable, that in turn, also puts restrictions on the lattice of thick subcategories over such rings.

\begin{theorem}\label{golod}
    If a local ring $R$ is Golod with $\codepth R\geqslant 2$, then $R$ has full support $\V_R(R)=\spec \S$. Moreover, the cohomological support of any nonzero object $M$ of $\Df(R)$ satisfies the inequality $\codim \V_R(M)\les 1$, and every closed cone having codimension at most one is the cohomological support variety of some object in $\Df(R)$.
\end{theorem}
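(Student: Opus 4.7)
The plan is to establish the three claims separately, in order of increasing difficulty.

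Realizability would follow directly from tools already in \cref{s_support}: the codimension-zero case, $\spec\S$ itself, is realized as $\V_R(k)$ by \cref{ex_koszul}, while any closed conical subset of codimension one is a hypersurface $\cV(\zeta)$ for some homogeneous $\zeta\in\S$, realized by the Koszul object $L_\zeta\in\Df(R)$ of \cref{ex_koszul}.

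For the codimension bound $\codim\V_R(M)\le 1$, I would invoke the classical characterization of Golod rings via Lusternik--Schnirelmann category: a local ring is Golod precisely when $\cat(R)\le 1$. Substituting this into \cref{t_bound} for any nonzero $M\in\Df(R)$ yields
\[
\cat(R)+1 \ge \varepsilon_2(R)-\dim\V_R(M)+1,
\]
which rearranges to $\codim\V_R(M)\le\cat(R)\le 1$.

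The remaining assertion — full support $\V_R(R)=\spec\S$ when $\codepth R\ge 2$ — is the main content. My strategy is to exploit the structure of the homotopy Lie algebra of a Golod ring: $\pi^{\ge 2}(R)$ is known to be the free graded Lie algebra on a set of generators in bijection with a homogeneous basis of the shifted dual of Koszul homology $\H_{\ge 1}(K^R)^{\vee}$. When $\codepth R\ge 2$ at least two such generators are present (from $\H_1(K^R)$ together with the top nonzero Koszul homology), so the radical of this free graded Lie algebra vanishes; in particular $\rad^2(\pi^*(R))=0$. By \cref{th_radical_support}, no nonzero linear form in $\S^2$ vanishes on $\V_R(R)$. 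Combined with the codimension bound from the previous step, the only remaining possibility to exclude is that $\V_R(R)$ is a hypersurface cut out by a polynomial of degree at least two that is not contained in any linear hyperplane.

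This last step is the principal obstacle. To rule it out I would make the $\S$-module structure of $\Ext_E(k,\widehat{R})$ explicit using the minimal model $Q[X]$ and the free graded Lie algebra structure of $\pi^{\ge 2}(R)$, aiming to exhibit inside $\Ext_E(k,\widehat{R})$ a free $\S$-submodule of positive rank — this would force $\supp_\S\Ext_E(k,\widehat{R})=\spec\S$, completing the proof.
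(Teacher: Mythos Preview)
Your treatment of the realizability claim and the codimension bound matches the paper's: both follow immediately from \cref{ex_koszul} and from $\cat(R)\le 1$ combined with \cref{t_bound}.

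For the assertion $\V_R(R)=\spec\S$, however, your argument is incomplete, and the detour through $\rad^2(\pi^*(R))$ and \cref{th_radical_support} does not help. As you correctly observe, knowing that $\V_R(R)$ lies in no hyperplane and has codimension at most one still leaves open the possibility of a spanning hypersurface of higher degree; your proposed fix---locating a free $\S$-submodule of $\Ext_E(k,\widehat{R})$ via the minimal model---is only a sketch.

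The paper bypasses the homotopy Lie algebra entirely here and argues directly through the functors $\t$ and $\h$ of \cref{t functor}. The key structural input is Golod's own description of the Koszul homology algebra: when $R$ is Golod one has $\t(K^R)\simeq k\ltimes V$ as a dg $\Lambda$-module, a trivial extension by a graded $k$-vector space $V$ on which $\Lambda_{\ge 1}$ acts by zero. The hypothesis $\codepth R\ge 2$ forces $V_2\neq 0$, and the trivial $\Lambda$-action then makes $k$ a direct $\Lambda$-summand of $\t(K^R)$. Applying $\h$ and using $\h k\simeq \S$ gives $\supp_\S\H(\h\t K^R)=\spec\S$, and \cref{p_koszulbggsupport} identifies this with $\V_R(R)$. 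This is both shorter and avoids the residual hypersurface case that your approach leaves unresolved.
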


\begin{proof}
    By \cite[Proposition 9]{Briggs:2018}, when $R$ is Golod we have $\cat(R)=1$. Hence, by \Cref{t_bound} we conclude that $\codim \V_R(M) \les 1$ whenever $M$ is a nonzero object in $\Df(R)$. The realizability assertion follows from \cref{ex_koszul}.
    
 So it remains to show $\V_R(R)=\spec \S.$ To this end, using the notation from \cref{t functor}, since $R$ is Golod 
    \[
    \t (K^R) \simeq k \ltimes V\,
    \]
 as a $\Lambda$-module,  where $V=\{V_i\}_{i\ges 1}$ is a graded $k$-vector space and where the $\Lambda$-action on  $k\ltimes V$ is trivial: 
 \[
 \Lambda_1\cdot k= V_1\quad\text{and}\quad \Lambda_1 \cdot V=0\,;
 \] 
 see \cite[Theorem~4.6]{Golod}. Since $R$ is not a hypersurface ring  $V_2\neq 0$ and so $k$ is a summand of $k\ltimes V$ over $\Lambda$. Therefore, 
     \[
     \supp_\S(\h\t K^R)=\supp_\S(\h k)\,.
     \]
     Finally, since $\h k\simeq \S$ the desired results follow from \cref{p_koszulbggsupport}. 
\end{proof}

\begin{remark}
The fact that $\V_R(R)=\spec \S$ when $R$ is a  non-hypersurface Golod ring can also be established using the structure of a system of higher homotopies for $\widehat{R}$ over $Q$. This makes use of the A$_\infty$-structure on a minimal $Q$-free resolution of $\widehat{R}$ and will be explained in future work. 
\end{remark}

\begin{remark}\label{remark Golod no embedded defs}
As described in \Cref{remark embedded deformation}, \Cref{golod} implies that if $R$ is a non-hypersurface Golod ring, then the conormal free rank of $R$ is zero, and in particular $R$ does not have an embedded deformation. Notice that the fact that $R$ has no embedded deformation also follows from the fact that $\pi^{\geqslant 2}(R)$ is free when $R$ is Golod, using \cite[Theorem 4.6]{Golod}. 
\end{remark}

Our results show that the bounded derived category of a Golod local ring looks markedly different to that of a complete intersection ring; compare with \cref{rem_ci_thick_subcats} and \cref{cor_thick_intersection}, as well as \cite[Theorems A \& B]{Elagin/Lunts:2021}.

\begin{corollary}\label{cor Lzeta intersection golod}
\label{proxysmall}
 Let $R$ be a Golod local ring with $\codepth R\geqslant 2$. 
 \begin{enumerate}
     \item \label{proxysmall1}For each $\zeta \in \S$ of positive degree, $L_\zeta$ is not proxy small. 
     \item \label{proxysmall2} If $\zeta,\zeta'$ are in $\S^{>0}$ with $\gcd(\zeta,\zeta')=1$, then  
\[
\thick_R L_\zeta \cap \thick_R L_{\zeta'}=0\,.
\]
 \end{enumerate}
\end{corollary}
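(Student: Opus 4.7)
The plan is to deduce both parts from \cref{golod} by tracking cohomological support through the thick-subcategory monotonicity in \cref{rem_thick_support}. I will use repeatedly that $\V_R(L_\zeta)=\cV(\zeta)$ from \cref{ex_koszul}, together with the fact that any nonzero perfect complex $W$ satisfies $\V_R(W)=\V_R(R)$ (as invoked in the proof of \cref{upper bound on dim rad}).

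For \cref{proxysmall1} I would argue by contradiction. If $L_\zeta$ is proxy small then by definition there exists a nonzero perfect complex $W$ with $W\in\thick_R L_\zeta$. Applying \cref{rem_thick_support} gives $\V_R(W)\subseteq \cV(\zeta)$, which is a proper subvariety of $\spec\S$ since $\zeta$ has positive degree. But $\V_R(W)=\V_R(R)=\spec\S$, the second equality by \cref{golod} under the Golod plus codepth-two hypotheses. This contradicts the previous containment.

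For \cref{proxysmall2} I would argue similarly. Suppose $M\in \thick_R L_\zeta \cap \thick_R L_{\zeta'}$. Two applications of \cref{rem_thick_support} give
\[
    \V_R(M)\subseteq \cV(\zeta)\cap\cV(\zeta')=\cV(\zeta,\zeta')\,.
\]
Since $\S$ is a polynomial ring over $k$ and hence a UFD, the hypothesis $\gcd(\zeta,\zeta')=1$ prevents $\zeta'$ from lying in any minimal prime of the principal ideal $(\zeta)$, so by Krull's altitude theorem $\height(\zeta,\zeta')=2$. Therefore $\codim \V_R(M)\ges 2$, but \cref{golod} bounds $\codim \V_R(N)\les 1$ for every nonzero $N$ in $\Df(R)$. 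This forces $M\simeq 0$, as required.

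I do not anticipate a genuine obstacle: both parts amount to reading off the cohomological support of objects in $\thick_R L_\zeta$ and comparing with the uniform codimension bound of \cref{golod}. The only point requiring a little care is invoking the precise definition of proxy smallness in \cref{proxysmall1} so as to secure a nonzero perfect complex inside $\thick_R L_\zeta$; once that is in hand the argument is immediate.
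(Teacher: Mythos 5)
Your proof is correct and follows essentially the same route as the paper: both parts combine the support monotonicity of \cref{rem_thick_support}, the computation $\V_R(L_\zeta)=\cV(\zeta)$ from \cref{ex_koszul}, and the codimension bound of \cref{golod}. The only differences are cosmetic — you unpack proxy smallness into ``a nonzero perfect complex lies in $\thick_R L_\zeta$'' where the paper cites \cite[3.3.2]{Pollitz:2019} directly, and you get $\height(\zeta,\zeta')=2$ via the UFD/Krull argument where the paper observes that $\zeta,\zeta'$ form a regular sequence; these are equivalent.
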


\begin{proof}
    By \cite[3.3.2]{Pollitz:2019}, if $L_\zeta$ is proxy small then \[ \V_R(R)\subseteq \V_R(L_\zeta)=\cV(\zeta);\]
    the second equality uses \cref{ex_koszul}. However this contradicts \Cref{golod}, and justifies \cref{proxysmall1}. 
    
    For \cref{proxysmall2}, if $M$ is in $\thick_R L_\zeta \cap \thick_R L_{\zeta'}$, then 
    \[\V_R(M)\subseteq \V_R(\zeta)\cap \V_R(\zeta')=\cV(\zeta)\cap \cV(\zeta')= \cV(\zeta,\zeta')\,;\]
    the first containment is again \cite[3.3.2]{Pollitz:2019}, while the next equality follows from \cref{ex_koszul}. On the other hand, since $\zeta, \zeta'$ form a regular sequence, $\cV(\zeta,\zeta')$ must have codimension exactly $2$. Therefore, $\V_R(M)$ would also have codimension at least $2$, which contradicts the first part of the theorem.
\end{proof}

We end with an example of a class of Golod rings where one can explicitly realize all codimension one cones using modules, generalizing \cite[Example 6.4.3]{Pollitz:2021}.

\begin{example}
Let $R$ be a codepth two non-complete intersection ring, and assume $R$ that is complete and $k$ is algebraically closed.  In this case, $R\cong Q/I$ where  $(Q,\m)$ is a regular local ring, and $I=(xy,yz)$,  with $x,z$ a regular sequence  and $y$ a nonzero element of $\m$. 

For each $(p, q) \in \mathbb{A}^2_k \smallsetminus \{ (0,0) \}$ we consider the $R$-module
\[
M_{(p,q)} \coloneqq R/(px+qz).
\]
If $p \neq 0$ then $M_{(p,q)} \cong Q  / (px+qz,yz)$, and if $p = 0$ then $M_{(0,b)} \cong Q / (xy,z)$. In either case $M_{(p,q)}$ is a complete intersection ring, so setting $J_{(p,q)} = (xy,yz,px+qy)$, by \cref{ex_bej}, we obtain
\[
\V_R(M_{(p,q)}) = \ker \left( I/\m I \to J_{(p,q)} / \m J_{(p,q)} \right) = \{ (a,b) \in \mathbb{A}_k^2 \mid qa = pb  \}.
\]
Note that any codimension one cone in $\mathbb{A}^2_k$ is a finite union of lines through the origin. Given such a variety
\[
V= \bigcup_{i=1}^n\{ (a,b) \in \mathbb{A}_k^2 \mid q_ia = pb_i  \}\,,
\]
we obtain
$
\V_R(M)=V
$ by setting $M=\bigoplus_{i=1}^n M_{(p_i,q_i)}$ and  making use of  \cite[Proposition~5.1.2]{Pollitz:2021}.
\end{example}

\section*{Acknowledgements}

We thank Srikanth Iyengar for helpful comments on a previous version of the paper. We are also very grateful to the referee for their excellent suggestions and comments that greatly improved this work.
Briggs was partly funded by NSF grant DMS-1928930, and partly funded by the European Union under the Grant Agreement no.\ 101064551, Hochschild. Grifo was supported by NSF grant DMS-2140355 and NSF CAREER grant DMS-2236983. Pollitz was supported by NSF grants DMS-1840190, DMS-2002173, and DMS-2302567.


\bibliographystyle{alpha}
\bibliography{references}

\end{document}